\newcommand{\s}{\sigma}
\newcommand{\al}{\alpha}
\newcommand{\QQ}{\mathbb{Q}}
\newcommand{\C}{\mathbb{C}}
\newcommand{\CC}{\mathbb{C}}
\newcommand{\ZZ}{\mathbb{Z}}
\newcommand{\PP}{\mathbb{P}}
\newcommand{\FF}{\mathbb{F}}
\newcommand{\RR}{\mathbb{R}}
\newcommand{\y}{\mathfrak{y}}
\newcommand{\Z}{\mathbb{Z}}
\newcommand{\A}{\mathcal{A}}
\newcommand{\LL}{\mathcal{L}}
\newtheorem{thm}{Theorem}[section]
\newtheorem*{thm*}{Theorem}
\newtheorem{conjecture}[thm]{Conjecture}
\newtheorem*{lemma*}{Lemma}
\newtheorem{definition}[thm]{Definition}
\newtheorem{example}[thm]{Example}
\newtheorem{prs}[thm]{Proposition}
\newtheorem{remark}[thm]{Remark}
\newtheorem{notation}[thm]{Notation}
\newtheorem*{notation*}{Notation}
\begin{document}
\title [Conjugation-Free groups, LCS and line arrangements]{Conjugation-free groups,  lower central series and line arrangements}

\author{Michael Friedman}

\address{Michael Friedman, Institut Fourier, 100 rue des maths, BP 74, 38402 St Martin d'H\'eres cedex, France}
\email{Michael.Friedman@ujf-grenoble.frþ}

\begin{abstract}
The quotients $G_k/G_{k+1}$ of the lower central series of a finitely presented group $G$ are an important invariant of this group.
In this work we investigate the ranks of these quotients in the case of a certain class of conjugation-free groups, which are groups generated by $x_1,\ldots,x_n$ and  having only cyclic relations:
 $$ x_{i_t} x_{i_{t-1}} \cdot \ldots \cdot x_{i_1} = x_{i_{t-1}} \cdot \ldots \cdot x_{i_1} x_{i_t} = \cdots = x_{i_1} x_{i_t} \cdot \ldots \cdot x_{i_2}.$$

 Using tools from group theory and from the theory of line arrangements we  explicitly find these ranks, which depend only at the number and length of these cyclic relations. It follows that for these groups  the associated graded Lie algebra $gr(G)$ decomposes, in any degree, as a direct product of local components.
\end{abstract}

\maketitle


\section{Introduction}

Let $G$ be a group and consider its lower central series defined by $G_1 = G, G_k = [G_{k-1},G], k \geq 2$. The graded sum
$$
gr(G) \doteq \bigoplus_{k \geq 1} G_k/G_{k+1}
$$
has a graded Lie algebra structure induced by the commutator bracket on $G$. In general, $gr(G)$ reflects many properties of the group $G$. For example, if $G$ is finitely generated, then the abelian groups $G_k/G_{k+1}$ are also finitely generated. It is, however, difficult  to find out the structure of $G_k/G_{k+1}$ even in the case when $G$ is finitely generated. Thus, determining this structure, or even the ranks of the quotients $\phi_k(G) \doteq \text{rank}(G_k/G_{k+1})$, for a certain class
of finitely generated groups is a significant problem.

In the case of $G = \FF_n$, the free group of rank $n$, Magnus (see e.g. \cite{MKS}) showed that $gr(\FF_n)$ is the free Lie algebra on $n$ generators, whose ranks were computed by Witt \cite{W}. Hall \cite{H} introduced the basic commutators of $\FF_n$, showing that the coset classes of weight $k$ form a basis of $G_k/G_{k+1}$. Therefore, for the free group, the structure of $gr(\FF_n)$ is completely known. Another well-known example is the  work of Chen
\cite{Ch}, computing the structure of $gr(\FF_n/(\FF_n)'')$.

In this paper we concentrate on a particular class of finitely presented groups: groups  having a presentation  where all the relations are cyclic relations; that is, of the form:
$$ x_{i_t}^{s_{t}} x_{i_{t-1}}^{s_{t-1}} \cdot \ldots \cdot x_{i_1}^{s_{1}} = x_{i_{t-1}}^{s_{t-1}} \cdot \ldots \cdot x_{i_1}^{s_{1}} x_{i_t}^{s_{t}} = \cdots = x_{i_1}^{s_{1}} x_{i_t}^{s_{t}} \cdot \ldots \cdot x_{i_2}^{s_{2}},$$
where $\{x_1,\ldots,x_n\}$ are the generators of the group, $2 \leq t$, $\{ i_1,i_2, \dots , i_t \} \subseteq \{1, \dots, n \}$ is an increasing subsequence of indices and $s_{j} \in \langle x_1,\ldots,x_n \rangle$. We call such groups \emph{cyclic-related} groups (see Definition \ref{defCyclicRelated}) and the above relation a \emph{cyclic relation} of length $t$. When all the conjugating elements are equal to the identity, i.e. $s_j = e$, then we call  such a group \emph{conjugation-free} (see Definition \ref{defConjFree}). The simplest example of such a group is the group $\FF_n/\langle R_n \rangle$ generated by $n$ generators with one cyclic relation of length $n$:
$$
R_n : \,\, x_n x_{n-1} \cdot \ldots \cdot x_1 =  x_{n-1}x_{n-2} \cdot \ldots \cdot x_1 x_n = \cdots = x_1x_n \cdot \ldots \cdot x_2.
$$
Obviously, $\FF_n/\langle R_n \rangle \simeq \FF_{n-1} \oplus \ZZ$, and thus  the structure of $gr(\FF_n/\langle R_n \rangle)$ is known.

 A less-known example of a cyclic-related group is the pure braid group $PB_n$ (see \cite[Theorem 2.3]{MM} for a cyclic-related presentation of this group). 
  Note that given an arrangement of hyperplanes $\A = \{H_1,\ldots,H_n\} \subset \CC^m$, the fundamental group $\pi_1(\CC^m - \A)$ is a cyclic-related group (see Remark \ref{RemRelCyclic}); thus, from this perspective, it is clear why the pure braid group is a cyclic-related group, as it is the fundamental group of the complement of a hyperplane arrangement known as the braid arrangement. In general, if $W$ is a real reflection group, $\A_W \subset \CC^m$ the associated hyperplane arrangement and $B_W$ the Artin group associated to $W$, then $\pi_1(\CC^m - \A_W) \simeq \text{ker}(B_W \twoheadrightarrow W)$
 is a cyclic-related group. Note that the study of the lower central series quotients of the pure braid group was initiated by Kohno \cite{K2}.

  As an arrangement of lines in $\CC^2$ is an example of a hyperplane arrangement, for any line arrangement $\LL$ the associated fundamental group $G=\pi_1(\CC^2 - \LL)$ is a cyclic-related group. Kohno \cite{K} proved, using the mixed Hodge structure on $H^1(\CC^2-\LL,\QQ)$,  that $gr(G) \otimes \QQ$ is isomorphic to the nilpotent completion of the holonomy Lie algebra of
 $\CC^2-\LL$. Falk proved \cite{Falk}, using Sullivan's 1-minimal models, that the lower central series ranks $\phi_k(G)$ are determined only by the combinatorics of $\LL$. However, a precise
combinatorial formula for $\phi_k(G)$, and even for $\phi_3(G)$, is not known (for the general formula for $\phi_3$, see e.g. \cite[Corollary 3.6]{SS} or Remark \ref{remPhi3For} below).

 These motivations lead us to investigate these ranks for cyclic-related and conjugation-free groups. While it is fairly easy to find out the rank $\phi_2$ for any cyclic-related group (see Section \ref{subsubsecG2G3}), it is a harder task when considering the rank $\phi_3$. We find that for a certain class of conjugation-free groups, there is an upper bound on $\phi_3$ (see Proposition \ref{prsFirstPart}). Explicitly, for such a group $G$, using only group-theoretic arguments, we prove that $\phi_3(G) \leq \sum_{i \geq 3} n_i\phi_3(\FF_{i-1})$, where $n_i$ is the number of cyclic relations of length $i$ of $G$.

  We then associate to this group $G$ a line arrangement $\LL(G)$, such that
 $\pi_1(\CC^2 - \LL(G))\simeq G$. By \cite{Falk2}, given any line arrangement $\LL$, $\phi_3(\pi_1(\CC^2 - \LL))$ is bounded from below by the above upper bound. Thus, for this class of conjugation-free groups, one can calculate $\phi_3$ directly, i.e. the
 third quotient of the lower central series behaves as if the  group is a direct product of free groups. From this it
follows, by \cite{PS}, that these conjugation-free groups are \emph{decomposable} (see Definition \ref{defDecomp}), that is, the quotient $G_k/G_{k+1}$  decomposes, for $k \geq 2$, as a direct product of local components and that $\phi_k(G) = \sum_{i \geq 3} n_i\phi_k(\FF_{i-1})$ for every $k \geq 2$ (see Theorem \ref{thmMainPhi}).
Hence, for this class of groups, we have found the complete structure of $gr(G)$.

\medskip

The paper is organized as follows. In Section \ref{secPrem} we define the main object of our research: cyclic-related and conjugation-free groups.
We prove in Section \ref{secLCSCGF} that for a certain class of conjugation-free groups, there is an upper bound on the third rank $\phi_3$. Associating to such a group a  line arrangement, we show in Section \ref{sectionLineArrCFgroup} that this upper bound is obtained. This leads us to find an explicit formula on the ranks $\phi_k$ for $k \geq 3$ and to find a new series of examples of decomposable groups.

{\textbf{Acknowledgements}}: The author would like to thank Michael Falk, Uzi Vishne, Roland Bacher and Mikhail Zaidenberg for stimulating talks, and the Fourier Institut in Grenoble for the warm hospitality and support.

\section{Cyclic-related and Conjugation-free groups} \label{secPrem}

In this section, we define the main object of our research: cyclic-related groups and conjugation-free groups. Let $G$ be a group, and denote $[a,b] \doteq a^{-1}b^{-1}ab$, $a^b = b^{-1}ab$ for $a,b \in G$.

\begin{definition} \label{defCyclicRelated} \rm{
Let $G$ be a finitely presented group. We say that $G$ is \emph{cyclic-related }
if $G$ has a presentation such that it is generated by $x_1,\ldots,x_n$ with relations $R_1,\ldots,R_w$ and the following requirements hold:
\begin{enumerate}
\item All the relations are of the following form:
$$R_p = R_{p,(i_t,  \ldots ,i_1)}: \,\,\, x_{i_t}^{s_{p,t}} x_{i_{t-1}}^{s_{p,t-1}} \cdot \ldots \cdot x_{i_1}^{s_{p,1}} = x_{i_{t-1}}^{s_{p,t-1}} \cdot \ldots \cdot x_{i_1}^{s_{p,1}} x_{i_t}^{s_{p,t}} = \cdots = x_{i_1}^{s_{p,1}} x_{i_t}^{s_{p,t}} \cdot \ldots \cdot x_{i_2}^{s_{p,2}},$$
where $ 1 \leq p \leq w$, $2 \leq t$, $\{ i_1,i_2, \dots , i_t \} \subseteq \{1, \dots, n \}$ is an increasing subsequence of indices, $s_{p,j} \in \langle x_1,\ldots,x_n \rangle$ for $1 \leq j \leq t$. These relations are called \emph{cyclic relations of length} $t$. Note that when $t=2$ we get the commutator $[x_{i_1}^{s},x_{i_2}^{s'}]=e$.
\item For every pair of indices $j_1,j_2 \in \{1, \dots, n\}$, $j_1 \neq j_2$, there is a unique relation $R_{p,(i_t,\ldots,i_1)}$ such that
$\{ j_1,j_2\} \subseteq \{ i_1,i_2, \dots , i_t \}$.
\item For every two relations $R_{p,(i_t,\ldots,i_1)}$, $R_{p',(j_s,\ldots,j_1)}$, $p \neq p'$ we have that: $$|\{ i_1,i_2, \dots , i_t \} \cap \{ j_1,j_2, \dots , j_s \}| \leq 1.$$
\end{enumerate}
}
\end{definition}

\begin{example}
\emph{(1) The pure braid group $PB_n$ is an example of a cyclic-related group; see \cite[Theorem 2.3]{MM}  for a presentation of $PB_n$ with generators and relations satisfying the requirements above.\\ (2) For any hyperplane arrangement $\A \subset \CC^k$,
$\pi_1( \CC^k - \A)$ is a cyclic-related group (see Remark \ref{RemRelCyclic}(1)).}
\end{example}

\begin{remark} \label{remListComm} \rm{
Note that  $R_{p,(i_t, \ldots ,i_1)}$ a cyclic relation of length $t$ can be written as a list of $t-1$ commutators:
$$
[x_{i_k}^{s_{p,k}}, x_{i_{k-1}}^{s_{p,k-1}} \cdot \ldots \cdot x_{i_1}^{s_{p,1}} \cdot  x_{i_t}^{s_{p,t}} \cdot \ldots \cdot x_{i_{k+1}}^{s_{p,k+1}}] = e,
$$
where $1 \leq k \leq t$.}
\end{remark}

\begin{definition} \label{defMultRel} \rm{
We say that a cyclic relation of length $t$ is \emph{multiple} if $t \geq 3$.
}
\end{definition}

\begin{definition} \label{defGraphCyclicRelated} \rm{
Given a cyclic-related group $G$, we define its \emph{associated graph} $Gr(G)$ in the following way:
 \begin{itemize}
\item Vertices: for every multiple cyclic relation $R_p = R_{p,(i_t,\ldots,i_1)}$ (where $t \geq 3$) associate a vertex $v_p = v_{p,(i_t,\ldots,i_1)}$. When it is clear what are the corresponding generators to $v_{p,(i_t,\ldots,i_1)}$ we write only $v_p$.
\item Edges: two vertices $v_p = v_{p,(i_t,\ldots,i_1)}$, $v_{p'} = v_{p',(j_s,\ldots,j_1)}$ are connected by a edge $e_{i_v}$ if
$$\{ i_1,i_2, \dots , i_t \} \cap \{ j_1,j_2, \dots , j_s \} = \{i_v\}  = \{j_u\},$$
where $1 \leq v \leq t, 1 \leq u \leq s$.
\end{itemize}
}
\end{definition}

\begin{definition} \label{defConjFree} \rm{
Let $G$ be  a cyclic-related group. If  every cyclic relation is of the form
$$R_{p,(i_t,\ldots,i_1)}: \,\,\, x_{i_t} x_{i_{t-1}} \cdot \ldots \cdot x_{i_1} = x_{i_{t-1}} \cdot \ldots \cdot x_{i_1} x_{i_t} = \cdots = x_{i_1} x_{i_t} \cdot \ldots \cdot x_{i_2},$$
(that is, there are no conjugations on the generators appearing in the relation) then we say that $G$ is \emph{conjugation-free} and that its relations are \emph{conjugation-free}.
}

\begin{example} \label{exmGraphCycRel} \rm{ (1) If the graph $Gr(G)$ of a conjugation-free group $G$ is empty, then all the relations of $G$ are commutators between all the generators and hence $G$ is a free abelian group, i.e. if $G$ is generated by $n$ generators then $G \simeq \ZZ^n$. This is since, by requirement (2) of Definition \ref{defCyclicRelated}, every generator commutes with all other generators. Note that this is not true any more if $G$ is cyclic-related, but not conjugation-free. For example, for the cyclic-related group
$$
G_1 \doteq \langle x_1, x_2, x_3 : [x_1,x_2^{x_3}] = [x_1,x_3^{x_2}] = [x_2,x_3^{x_1}] = e \rangle,
$$
the graph $Gr(G_1)$ is empty (as there are no multiple relations) but $G_1$ is not abelian (one can check that there exists an epimorphism to $Sym_3$).

(2) Let $G$ be a conjugation-free group with $n$ generators and one cyclic relation of length $n$, i.e.
$$
G \simeq \langle x_1,\ldots,x_n : x_nx_{n-1} \cdot \ldots \cdot x_1 = \cdots = x_1x_n \cdot \ldots \cdot x_2  \rangle.
$$
Then $G \simeq \FF_{n-1} \oplus \Z$ and the graph consists of only one vertex.

(3)  Let $G$ be a conjugation-free group such that $Gr(G)$ is a disjoint union of two graphs. Then  $G$ is a direct sum of two conjugation-free groups, whose graphs correspond to the two graphs which are the components of $Gr(G)$.

(4) If the graph of a conjugation-free group consists of only vertices (i.e. there are no edges), then, by the previous examples, this group is isomorphic to a direct sum of a free abelian group and free groups. See remark \ref{remCFnoCyc} for a generalization of this when $Gr(G)$  does not have  cycles.

(5) Let $G_2$ be a cyclic-related group generated by $x_1,\ldots,x_6$ with the following relations:
 \begin{itemize}
\item cyclic relations of length $3$:
$$
R_1:x_3^{x_4} x_2 x_1 = x_2 x_1 x_3^{x_4} = x_1 x_3^{x_4} x_2, \,\, R_2:x_6 x_5 x_1 = x_5 x_1 x_6 = x_1 x_6 x_5,\,\,R_3: x_5 x_4 x_3 = x_4 x_3 x_5 = x_3 x_5 x_4.
$$
\item cyclic relations of length $2$:
$$
R_4:x_4 x_2 = x_2 x_4, \,\, R_5:x_5 x_2^{x_3} = x_2^{x_3} x_5,\,\, R_6:x_6 x_2 = x_2 x_6,\,\, R_7:x_6 x_4 = x_4 x_6
$$
\end{itemize}

Then, the vertices of $Gr(G_2)$ are $v_1,v_2,v_3$ (associated to the relations $R_1,R_2,R_3$) and the edges are $e_{1}, e_{3}$ and $e_{5}$ (see Figure \ref{ExampGr}). Note that with this presentation $G_2$ is not conjugation-free.

\begin{figure}[h!]
\epsfysize 3cm
\epsfbox{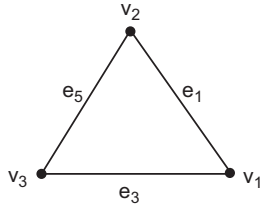}
\caption{The graph $Gr(G_2)$ associated to the cyclic-related group $G_2$.}
\label{ExampGr}
\end{figure}
}
\end{example}

\begin{remark}\emph{
Let $G$ be a cyclic-related group. By requirement (3) in Definition \ref{defCyclicRelated}, every two vertices of $Gr(G)$ are connected with at most one edge.}
\end{remark}
%

\end{definition}

%

\section{Lower central series and conjugation-free groups} \label{secLCSCGF}

In this section we prove the main result: that for a certain class of conjugation-free groups, the rank of  the third quotient of the lower central series is bounded from above. We will see in Section \ref{subsecLCSLineArr} that this inequality is in fact an equality, using  tools arising from line arrangements.

 First, we give in Section \ref{subsecLCS} a short survey  of some of the known results concerning the quotients of the lower central series, afterwards proving the main theorem in Section \ref{subsecG3G4}.

\subsection{The lower central series} \label{subsecLCS}
We begin with some notations. Let $G$ be a group generated by $x_1,\ldots,x_n$ and consider its lower central series $G_k$, where $G_1 = G$ and $G_k = [G_{k-1},G],\,k\geq 2$. Denote $\phi_k(G) = \text{rank}(G_k/G_{k+1})$. If $G = \FF_n$ the free group with $n$ generators, then we denote
$\omega_k(n)~\doteq~\phi_k(\FF_n) $.

\begin{notation}
\begin{enumerate}
\item $x_{i,j} \doteq [x_i,x_j] \in G_2$ for $i,j \in \{1,...,n\}$.
\item $x_{i,jk} \doteq [x_i,x_jx_k], x_{ij,k} \doteq [x_ix_j,x_k] \in G_2$ for $i,j,k \in \{1,...,n\}$.
\item $x_{i,j,k} \doteq  [x_{i,j},x_k] = [[x_i,x_j],x_k] \in G_3$ for $i,j,k \in \{1,...,n\}$.
\item $x_{i,j,kl} \doteq  [x_{i,j},x_kx_l] = [[x_i,x_j],x_kx_l] \in G_3$ for $i,j,k,l \in \{1,...,n\}$.
\item \emph{More generally, if $\al \in G$, then we denote
 $x_{i,\al} = [x_i,\al]$ and
 $x_{i,\al,k} = [x_{i,\al},x_k] \in G_3$ and in the same way for expressions of the form}
$x_{\al,j},\, x_{\al,j,k}$\emph{ or }$x_{i,j,\al}$.
\end{enumerate}
\end{notation}

\begin{remark}
\emph{If $G$ is a group generated by $x_1,\ldots,x_n$, then it is well-known that
 $G_2/G_3$ is generated by $x_{j,i}$ where $j>i$ and
 $G_3/G_4$ is generated by $x_{j,i,k}$ where $j>i$ and $k \geq i$ (see \cite{H}).}
\end{remark}
\noindent
Concentrating on the groups $G_2/G_3$ and $G_3/G_4$, we recall a few of their well-known properties.
\begin{prs} \label{prsG3G4}
\emph{(I)} For every $i,j,k \in \{1,\ldots,n\}$, the following  equivalences hold in $G_2/G_3$:
$$
[x_i,x_j^{x_k}] \equiv [x_i,x_j],\,\, x_{i,jk} \equiv x_{i,k}x_{i,j}.
$$
\noindent
\emph{(II)} The following  equivalences hold in $G_3/G_4$:

\noindent
  \begin{equation} \label{eqnEqG3G4}
  x_{i,j,kl} \equiv x_{i,j,l}x_{i,j,k},\,\, x_{i,jk,l} \equiv x_{i,k,l}x_{i,j,l}, \,\, x_{ij,k,l} \equiv x_{j,k,l}x_{i,k,l}.\end{equation}

\begin{equation} \label{eqnXijk}
 x_{i,j,k} \equiv x_{j,i,k}^{-1},\end{equation}
 where $i,j,k,l$ are either words in $G$ or indices in $\{1,\ldots,n\}$.
\end{prs}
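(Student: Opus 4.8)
The plan is to derive every equivalence from two elementary commutator expansion identities together with the fact that conjugation is invisible modulo the next term of the lower central series. With the conventions $[a,b]=a^{-1}b^{-1}ab$ and $a^b=b^{-1}ab$, a direct expansion gives
\begin{equation*}
[a,bc]=[a,c]\,[a,b]^c, \qquad [ab,c]=[a,c]^b\,[b,c]. \tag{$\ast$}
\end{equation*}
The second ingredient is the standard degree bookkeeping: if $h\in G_m$ and $g\in G$ then $h^g=h\,[h,g]$ with $[h,g]\in[G_m,G]=G_{m+1}$, so $h^g\equiv h \pmod{G_{m+1}}$. Conceptually, these two facts are exactly what makes the commutator descend to a \emph{bilinear} pairing $G_p/G_{p+1}\times G_q/G_{q+1}\to G_{p+q}/G_{p+q+1}$ on the associated graded $gr(G)$, and I will use them in that packaged form where convenient.

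For Part (I), I would first write $x_j^{x_k}=x_j\,[x_j,x_k]=x_j\,x_{j,k}$ with $x_{j,k}\in G_2$, and apply the first identity of $(\ast)$ to get $[x_i,x_j^{x_k}]=[x_i,x_{j,k}]\,[x_i,x_j]^{x_{j,k}}$. Here $[x_i,x_{j,k}]\in G_3$ since $x_{j,k}\in G_2$, while $[x_i,x_j]^{x_{j,k}}\equiv[x_i,x_j]\pmod{G_3}$ by the conjugation remark with $m=2$; hence $[x_i,x_j^{x_k}]\equiv[x_i,x_j]$ in $G_2/G_3$. For the second equivalence I expand $x_{i,jk}=[x_i,x_jx_k]=[x_i,x_k]\,[x_i,x_j]^{x_k}$ and replace the conjugate by $[x_i,x_j]$ modulo $G_3$, obtaining $x_{i,jk}\equiv x_{i,k}\,x_{i,j}$ (the order is immaterial, $G_2/G_3$ being abelian).

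For Part (II) I would treat the three distributivity relations uniformly. The first, $x_{i,j,kl}\equiv x_{i,j,l}\,x_{i,j,k}$, is literally the second equivalence of Part (I) with $x_i$ replaced by $x_{i,j}\in G_2$: expand $[x_{i,j},x_kx_l]=[x_{i,j},x_l]\,[x_{i,j},x_k]^{x_l}$ and use $[x_{i,j},x_k]\in G_3$ together with triviality of conjugation modulo $G_4$. The remaining two follow by bracketing the Part (I) identities with a further generator. Since $x_{i,jk}\equiv x_{i,k}\,x_{i,j}$ and $x_{ij,k}\equiv x_{i,k}\,x_{j,k}$ hold in $G_2/G_3$ (the latter from the second identity of $(\ast)$, exactly as above), additivity of the bracket in its first slot yields $x_{i,jk,l}\equiv x_{i,k,l}\,x_{i,j,l}$ and $x_{ij,k,l}\equiv x_{j,k,l}\,x_{i,k,l}$; if one prefers to avoid invoking bilinearity as a black box, one unwinds this by repeated use of $(\ast)$, discarding each correction term because it lies in $[G_3,G]=G_4$. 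Finally, $x_{i,j,k}\equiv x_{j,i,k}^{-1}$ comes from the exact identity $[x_i,x_j]\,[x_j,x_i]=e$, which gives $x_{j,i}\equiv x_{i,j}^{-1}$ in $G_2/G_3$; bracketing with $x_k$ then yields $x_{i,j,k}\,x_{j,i,k}\equiv e$ in $G_3/G_4$.

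These computations are routine, so the only point genuinely requiring care is the degree bookkeeping: at each application of $(\ast)$ one must verify that the conjugation correction and the discarded commutator land in $G_3$ (respectively $G_4$), so that they may be dropped modulo $G_3$ (respectively $G_4$). Equivalently, the single structural input I must invoke cleanly is that the commutator induces a well-defined bilinear pairing on the graded quotients that is antisymmetric at the $G_2$ level; once this is in hand, every assertion reduces to the identities $(\ast)$ and a weight count.
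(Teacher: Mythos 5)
Your proposal is correct and takes essentially the same route as the paper: both arguments rest on the expansion identities $[a,bc]=[a,c]\,[a,b]^c$ and $[ab,c]=[a,c]^b\,[b,c]$ together with the observation that $h^g=h\,[h,g]\equiv h$ modulo the next lower central term, so that every correction lands in $G_3$ (resp.\ $G_4$) and may be discarded. Your packaging of this as the bilinear, antisymmetric bracket induced on $gr(G)$ is simply a systematized form of the paper's explicit computations, e.g.\ its direct verification that $a^{-1}x_{j,i,k}a=x_{i,j,k}^{-1}$ with $a=x_{i,j}$, which is exactly your derivation of Equation~(\ref{eqnXijk}).
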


\begin{proof}
(I)  Obvious. \\
(II)(1) We give the proof only for the expression $x_{i,jk,l}$. The proof for the other expressions is similar.\\
$$x_{i,jk,l} = [[x_i,x_jx_k],x_l] = [[x_i,x_k][x_i,x_j]^{x_k},x_l] = [[x_i,x_k][x_k,[x_i,x_j]^{-1}][x_i,x_j],x_l].$$ When expanding the commutator brackets of the right hand side, we see that both the expression $[x_k,[x_i,x_j]^{-1}]$ and its inverse appear. As $[x_l,[x_i,x_j]^{-1}] \in G_3$, it commutes in the group $G_3/G_4$  with any other element. Therefore,
$$x_{i,jk,l} \equiv [[x_i,x_k][x_i,x_j],x_l] \equiv [[x_i,x_k],x_l] \cdot [[x_i,x_j],x_l] (\text{mod }G_4)  ,$$ where in the last equivalence we used that $[ab,c] = [a,c]^b[b,c] = [b,[a,c]^{-1}][a,c][b,c]$, and that $[b,[a,c]^{-1}] \in G_4$ if $a \in G_2$.

(2) Let $a = x_{i,j}$; thus $a^{-1} = x_{j,i}$. Therefore $x_{i,j,k} = [a,x_k] = a^{-1}x_k^{-1}ax_k$ and $x_{j,i,k} = [a^{-1},x_k] = ax_k^{-1}a^{-1}x_k$. Hence, $a^{-1}x_{j,i,k}a = x_k^{-1}a^{-1}x_ka = x_{i,j,k}^{-1}$. Thus, in $G_3/G_4$, we get that $x_{i,j,k} \equiv x_{j,i,k}^{-1}.$
\end{proof}

\subsubsection{The rank of $G_2/G_3$} \label{subsubsecG2G3}

From now on, let $G$ be a cyclic-related group with relations $R_1,\ldots,R_w$. In this subsection we  give a combinatorial description of $\phi_2(G)$ (see also Remark \ref{RemPhi23Top}(1) for a description of $\phi_2(G)$ via topological invariants). Recall again that $G_2/G_3$
is generated by $x_{j,i}$ when $j>i$.

Note that by Remark \ref{remListComm} and Proposition \ref{prsG3G4}(I), every cyclic relation of length $t$ in $G$ is equivalent, in $G_2/G_3$, to a list of $t-1$ commutators, where the generators appear without conjugations. That is, the relation $R_p$ of $G$:
$$R_p = R_{p,(i_t,\ldots,i_1)}: \,\,\, x_{i_t}^{s_{p,t}} x_{i_{t-1}}^{s_{p,t-1}} \cdot  \ldots \cdot  x_{i_1}^{s_{p,1}} = x_{i_{t-1}}^{s_{p,t-1}} \cdot  \ldots \cdot  x_{i_1}^{s_{p,1}} x_{i_t}^{s_{p,t}} = \cdots = x_{i_1}^{s_{p,1}} x_{i_t}^{s_{p,t}}  \cdot  \ldots \cdot  x_{i_2}^{s_{p,2}},$$
is equivalent in $G_2/G_3$ to the following list of commutators:
$$
R_{p,k}:\,\,[x_{i_k}, x_{i_{k-1}}  \cdot  \ldots \cdot  x_{i_1} \cdot x_{i_t}  \cdot  \ldots \cdot  x_{i_{k+1}}] = e,
$$
where $1 \leq k \leq t$. Moreover, by Proposition \ref{prsG3G4}(I) we see that every relation $R_{p,k}$, where $1 \leq k \leq t$, is equivalent to
\begin{equation} \label{eqnRelG2G3}
x_{i_k,i_{k+1}} \cdot \ldots \cdot x_{i_k,i_{t}} \cdot x_{i_k,i_{1}} \cdot \ldots \cdot x_{i_k,i_{k-1}} = e.
\end{equation}

By requirement (2) of Definition \ref{defCyclicRelated}, for every pair of indices $i,j$, the generator $x_{j,i}$ of $G_2/G_3$ appears as a term in the relations of $G_2/G_3$ only once. Therefore, for every two different relations $R_p,R_{p'}$ of $G$, when considering these relations
in $G_2/G_3$ (as in Equation (\ref{eqnRelG2G3})),
the generators that appear in these relations are different.

For example, every cyclic relation of length $3$: $x_k^{\alpha} x_j^{\beta} x_i^{\gamma} = x_j^{\beta} x_i^{\gamma} x_k^{\alpha} = x_i^{\gamma} x_k^{\alpha} x_j^{\beta}$ (where $\alpha,\beta,\gamma \in G$ and $k > j > i$) is equivalent to  equalities
of the form $x_{j,i} \equiv x_{k,j} \equiv x_{k,i}^{-1}$ in $G_2/G_3$. Hence, while in every such cyclic relation appear three generators of $G_2/G_3$, two of them can be expressed as the third (or as its inverse).

In the same way, while in a cyclic relation $R_p$ (in $G_2/G_3$) of length $m$ appear $\binom{m}{2}$ generators of $G_2/G_3$, $m-1$ of them can be expressed
as a product of the others; thus a cyclic relation $R_p$ of length $m$ contributes $v(p) \doteq \binom{m}{2} - m +1$ independent generators to $G_2/G_3$. Note that $v(p) = \binom{m-1}{2} = \omega_2(m-1)$.

 Hence $ \sum_{p=1}^w v(p) = \sum_{i \geq 3} n_i\omega_2(i-1)$, where  $n_i$ is the number of cyclic relations of length $i$.
 We therefore see that  $$\text{rank}(G_2/G_3) = \phi_2(G) = \sum_{i \geq 3} n_i\omega_2(i-1).$$

\subsection{The group $G_3/G_4$} \label{subsecG3G4}
Here we would like to generalize the above equality, proved for the rank of $G_2/G_3$, for the group $G_3/G_4$, for a special class of conjugation-free groups. We first introduce some notations.

\begin{notation}
\emph{Let $G$ be a cyclic-related group.\\(1) Let $n_m$ be the number of cyclic relations of length $m$.\\ (2) Let $\beta(Gr(G))$ be the first Betti number of the graph $Gr(G)$.}
\end{notation}

Thus, we want to prove the following theorem:

\begin{thm} \label{ThmMain}
Let $G$ be a conjugation-free group such that $\beta(Gr(G)) \leq 1$. Then:
$$\phi_3(G) = \sum_{i \geq 3} n_i\omega_3(i-1).$$
\end{thm}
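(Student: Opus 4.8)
The plan is to prove the equality by sandwiching $\phi_3(G)$ between a group-theoretic upper bound and a geometric lower bound, both equal to $\sum_{i\ge 3}n_i\omega_3(i-1)$. The upper bound is the harder, purely algebraic part, and it is where the hypothesis $\beta(Gr(G))\le 1$ is used; the lower bound comes from realizing $G$ as an arrangement group and quoting the general lower bound of \cite{Falk2}.

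For the upper bound I work inside the abelian group $G_3/G_4$, generated by the classes $x_{j,i,k}$ with $j>i$ and $k\ge i$ (these number $\omega_3(n)$ for the free group). Each cyclic relation $R_p$ of length $m$ collapses in $G_2/G_3$ to the linear relations of Equation \eqref{eqnRelG2G3}; bracketing each of these degree-two relators with a generator $x_l$ and using the additivity identities of Proposition \ref{prsG3G4}(II) yields a family of degree-three relations in $G_3/G_4$. Since a generating set reduced by this subfamily only overcounts, it suffices for an upper bound to split the generators $x_{j,i,k}$ into \emph{local} ones, for which $i,j,k$ all lie in the index set $S_p$ of a single relation $R_p$, and \emph{mixed} ones, for which they do not. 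For a fixed relation, the subgroup spanned by its local classes modulo its local degree-three relations behaves exactly as the corresponding quotient for $\FF_{m-1}\oplus\ZZ$ (cf.\ Example \ref{exmGraphCycRel}(2)), and hence is generated by at most $\omega_3(m-1)$ elements; summing over $p$ produces the desired bound, provided every mixed class is eliminable in favour of the local ones.

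Eliminating the mixed classes is the crux of the argument. By requirement (2) of Definition \ref{defCyclicRelated} each of the pairs $\{i,j\},\{i,k\},\{j,k\}$ lies in a unique relation, so a mixed class encodes a path or a short loop in the graph $Gr(G)$; applying the degree-two relation of the relation carrying one such pair, bracketed with the remaining index, rewrites $x_{j,i,k}$ as a combination of other degree-three classes. I would set up a well-ordering on triples of indices and argue by induction that this rewriting terminates at local classes. When $Gr(G)$ is a forest, i.e.\ $\beta(Gr(G))=0$, the rewriting meets no obstruction. The single independent cycle permitted when $\beta(Gr(G))=1$ creates one consistency condition around the loop, and the main obstacle is to show that this condition is automatically satisfied, so that no extra global class survives and the upper bound is not exceeded; I expect this to be the content of the proposition referenced as \ref{prsFirstPart}.

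For the matching lower bound I associate to $G$ the line arrangement $\LL(G)$ whose lines are indexed by the generators and whose points of multiplicity $m$ correspond to the cyclic relations of length $m$; requirements (2)--(3) of Definition \ref{defCyclicRelated} make this a consistent incidence structure, and $\beta(Gr(G))\le 1$ is what should guarantee that it is realizable over $\CC$ with $\pi_1(\CC^2-\LL(G))\simeq G$. Since the complement near a point of multiplicity $m$ is that of a pencil and contributes $\omega_3(m-1)$ to the third lower central series rank, the general lower bound of \cite{Falk2} gives $\phi_3(G)\ge\sum_{i\ge3}n_i\omega_3(i-1)$. Combined with the upper bound this forces equality, and I would finish by checking the degenerate cases (empty graph and graphs without edges) directly against Example \ref{exmGraphCycRel}.
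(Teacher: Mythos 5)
Your overall architecture is exactly the paper's (upper bound by group theory using $\beta(Gr(G))\le 1$, lower bound by realizing $G$ as $\pi_1(\CC^2-\LL(G))$ and quoting \cite{Falk2}), but the crux of the upper bound is not actually proved: you explicitly defer the elimination of mixed classes to ``the content of the proposition referenced as \ref{prsFirstPart},'' and the mechanism you sketch for it would not suffice as stated. Bracketing the degree-two relations of Equation (\ref{eqnRelG2G3}) with a generator and invoking Proposition \ref{prsG3G4}(II) produces relations in $G_3/G_4$ that depend only on the images of the relators in $G_2/G_3$ --- and by Proposition \ref{prsG3G4}(I) these images are identical for \emph{any} cyclic-related group with the same combinatorics, conjugations or not. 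Since $PB_4$ has a cyclic-related presentation with the same kind of degree-two data yet $\phi_3(PB_4)=10$ exceeds the would-be bound $8$ (Remark \ref{remPhi3For}), any rewriting scheme driven purely by such bracketed relations plus Jacobi/antisymmetry is delicately dependent on the graph hypothesis, and your two key claims --- that the rewriting terminates at local classes for forests, and that the single ``consistency condition around the loop'' is automatically satisfied when $\beta=1$ --- are precisely the nontrivial content and receive no argument. The paper's proof of Proposition \ref{prsFirstPart} does something stronger and more delicate: it shows every non-local generator $x_{j,i,k}$ is $\equiv e$ modulo $G_4$, by a case analysis that uses the \emph{full} conjugation-free cyclic relations in $G$ itself (the product identities (\ref{relPointY}), not just their degree-two shadows), conjugation-invariance of $G_3$-classes mod $G_4$ (Remark \ref{remConj}), and the Hall--Witt identity; the hypothesis $\beta(Gr(G))\le 1$ enters concretely to force specific commutations (e.g.\ that $x_i$ commutes with all generators of the relation containing $x_j$ and $x_k$, since otherwise the graph would acquire a second cycle), not as one global constraint.

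There is also a smaller but genuine gap in your lower-bound half. Drawing a real arrangement $\LL(G)$ with the prescribed incidences is easy (Definition \ref{defLineArrAss}); what is not automatic is $\pi_1(\CC^2-\LL(G))\simeq G$. This requires knowing that the arrangement group itself admits a conjugation-free geometric presentation with the same combinatorics, which is a cited theorem (Proposition \ref{prsCFless1}, from \cite{EGT2,FG}, for $\beta(\LL)\le 1$), and then a careful matching of presentations as in Proposition \ref{prsCFGgroupToLineGroup}, where one must align the cyclic orderings of generators around each multiple point (Remark \ref{remCycRelPres}). Your phrase ``$\beta(Gr(G))\le 1$ is what should guarantee that it is realizable over $\CC$ with $\pi_1(\CC^2-\LL(G))\simeq G$'' names the right hypothesis but conflates combinatorial realizability, which is trivial, with the isomorphism of groups, which is the actual content and rests on this external input.
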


The proof of this theorem is divided into two parts. The first part, given in Proposition \ref{prsFirstPart}, proves that
$\phi_3(G) \leq \sum_{i \geq 3} n_i\omega_3(i-1)$, using only arguments from group-theory. The second part, given in section \ref{subsecLCSLineArr},
proves that
$\phi_3(G) \geq \sum_{i \geq 3} n_i\omega_3(i-1)$ and  uses arguments from the theory of line arrangements. In fact, as will be seen in Remark \ref{remRelCond}, the condition that $\beta(Gr(G)) \leq 1$ can be relaxed. Before proving these propositions, we need to examine closely the generators of $G_3/G_4$.

\begin{example} \label{exmNlines}
\emph{Let $G \simeq \langle x_1,\ldots,x_n : x_nx_{n-1} \cdot \ldots \cdot x_1 = \cdots = x_1x_n \cdot \ldots \cdot x_2  \rangle$.
As $G \cong \FF_{n-1} \oplus \ZZ$, we get that $\phi_k(G)~=~\omega_k(n-1)$.}
\end{example}

\begin{definition}\label{defLocalGen}
\emph{ Let $G$ be a conjugation-free group generated by $x_1,\ldots,x_n$, with the relations $R_1,\ldots, R_w$.\\ (1) A generator of $G_3/G_4$ of the form $x_{j,i,k} = [[x_j,x_i],x_k]$ is called \emph{local} if there is a multiple relation $R_p = R_{p,(u_t,\ldots,u_1)}$ ($2<t, 1 \leq p \leq w $) such that $\{i,j,k\} \subseteq \{u_r\}_{r=1}^t$.\\ (2) We say that a generator $x_k$ \emph{participates} in a cyclic relation $R_p = R_{p,(u_t,\ldots,u_1)}$ if $k \in \{u_r\}_{r=1}^t$.}
\end{definition}

\begin{remark}
\emph{Every conjugation-free cyclic relation of length $t$ induces $t^3-t$ local generators of $G_3/G_4$, which are not induced from other relations (since we subtract the number of generators of the form $x_{i,i,i}$. Note also that $x_{i,i,i}=e$). However,
by Example \ref{exmNlines}, from such relation there are only  $\omega_3(t-1)$ non-trivial independent local generators, that is, the other local generators can be expressed as a multiplication of powers of the other basic local generators.}
\end{remark}

We are now ready to prove the first part of Theorem \ref{ThmMain}:

\begin{prs} \label{prsFirstPart}
Let $G$ be a conjugation-free group such that $\beta(Gr(G)) \leq 1$. Then:
$$\phi_3(G) \leq \sum_{i \geq 3} n_i\omega_3(i-1).$$
\end{prs}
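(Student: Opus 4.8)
The plan is to bound $\phi_3(G) = \text{rank}(G_3/G_4)$ by counting a spanning set of generators of $G_3/G_4$ and showing that all of them are accounted for by the local generators coming from the multiple relations. Recall that $G_3/G_4$ is generated by the classes $x_{j,i,k}$ with $j>i$ and $k \geq i$. I would split these generators into two types: the \emph{local} ones, where $\{i,j,k\}$ is contained in the index set of a single multiple relation $R_p$, and the \emph{non-local} ones, where no single relation contains all three indices. The key claim to establish is that every non-local generator $x_{j,i,k}$ is trivial in $G_3/G_4$, so that $G_3/G_4$ is spanned by the local generators alone; the count of independent local generators then gives the asserted upper bound.

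For the non-local generators, the main tool is that for each pair of distinct indices $\{a,b\}$, requirement (2) of Definition \ref{defCyclicRelated} provides a unique relation containing $\{a,b\}$, and by Proposition \ref{prsG3G4}(I) this relation expresses $x_{a,b}$ in $G_2/G_3$ as a product of the other commutators appearing in that same relation. First I would show that if $x_{j,i,k}$ is non-local, then one of the three pairs drawn from $\{i,j,k\}$, say $\{i,j\}$, lies in a relation $R_p$ whose index set does not contain $k$; substituting the $G_2/G_3$-expression \eqref{eqnRelG2G3} for $x_{i,j}$ into the outer bracket $[\,\cdot\,,x_k]$ and using the expansion rules \eqref{eqnEqG3G4} rewrites $x_{j,i,k}$ as a product of generators $x_{a,b,k}$ where $\{a,b\}$ ranges over the \emph{other} pairs in $R_p$. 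Because $k$ does not participate in $R_p$, none of these rewritten pieces can collapse into $x_{j,i,k}$ again, and by iterating (and invoking the commutation of conjugates modulo $G_4$, exactly as in the proof of Proposition \ref{prsG3G4}(II)) one reduces every non-local generator to a combination that ultimately vanishes or reduces to local generators carried by a different relation.

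Next I would turn the surviving local generators into the stated count. By the Remark following Definition \ref{defLocalGen}, a single multiple relation of length $i$ contributes exactly $\omega_3(i-1)$ independent local generators, mirroring the direct-product behavior of Example \ref{exmNlines} where $G \cong \FF_{n-1} \oplus \ZZ$ forces $\phi_3 = \omega_3(n-1)$. Summing over all multiple relations gives the candidate bound $\sum_{i \geq 3} n_i \omega_3(i-1)$. The hypothesis $\beta(Gr(G)) \leq 1$ enters precisely here: I would argue that local generators attached to two \emph{distinct} relations are independent because any two multiple relations share at most one index (requirement (3)), so no local generator is simultaneously local to two relations, and the bound on the first Betti number prevents the relations arising from a cycle in $Gr(G)$ from producing extra linear dependencies among the cross-relation generators $x_{j,i,k}$ whose indices straddle an edge of the graph.

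The hard part will be the reduction of the non-local generators, specifically controlling the terms produced when one substitutes a long relation \eqref{eqnRelG2G3} into an outer commutator: the expansions \eqref{eqnEqG3G4} generate many intermediate terms, and one must verify that all error terms genuinely lie in $G_4$ and that the bookkeeping of which pairs belong to which relation never creates a hidden dependency that would lower $\phi_3$ below the claimed upper bound — equivalently, that the spanning set really has at most $\sum_{i \geq 3} n_i \omega_3(i-1)$ elements. This is where the acyclicity-type condition $\beta(Gr(G)) \leq 1$ must be used carefully, since a higher first Betti number would allow a genuine global relation among the local generators coming from different vertices, breaking the clean additive count; I would isolate the single allowed cycle and check by hand that it does not force any such collapse.
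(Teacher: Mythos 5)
Your overall skeleton matches the paper's: split the generators $x_{j,i,k}$ of $G_3/G_4$ into local and non-local ones, show every non-local generator vanishes modulo $G_4$, and count the local generators relation-by-relation via Example \ref{exmNlines} (this last counting step is fine, and note that for an upper bound no independence claim is needed). The genuine gap is in your central reduction step. Substituting the $G_2/G_3$-expression \eqref{eqnRelG2G3} for $x_{j,i}$ into the outer bracket rewrites $x_{j,i,k}$ as a product of terms $x_{j,b,k}$ where the pair $\{j,b\}$ lies in the \emph{same} relation $R_p$; your claim that ``none of these rewritten pieces can collapse into $x_{j,i,k}$ again'' is false, since re-substituting inside $R_p$ immediately reproduces $x_{j,i,k}$, and your iteration comes with no termination measure. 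The actual engine of the paper's proof is absent from your plan: a case analysis according to whether $x_k$ commutes with $x_i$, with $x_j$, or with neither; the Hall--Witt identity to move $x_k$ into the inner bracket (e.g., converting $x_{j,i,k}$ into $x_{k,j,i}$ when $[x_i,x_k]=e$), so that the inner pair lands in a second multiple relation $R_2$; and then word-level manipulations in $G$ itself using the full conjugation-free relations (Relations \eqref{relPointY}, \eqref{relPointC}) together with the observation of Remark \ref{remConj} that elements of $G_3$ may be freely conjugated modulo $G_4$. The case of a $3$-cycle in $Gr(G)$ (the paper's Third case) needs yet another argument, which you do not address beyond ``check by hand.''

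You also misplace where the hypothesis $\beta(Gr(G)) \leq 1$ enters, and this is a logical error, not just a stylistic one: for an \emph{upper} bound, hidden dependencies among local generators could only lower $\phi_3$, which is harmless, so your worry that a dependency ``would lower $\phi_3$ below the claimed upper bound'' is backwards, and the hypothesis is not needed to separate local generators attached to distinct relations (requirement (3) of Definition \ref{defCyclicRelated} already handles that). In the paper, $\beta(Gr(G)) \leq 1$ is used for something entirely different: it forces commutations such as $[x_i,C]=[x_i,C']=[x_i,C'']=e$ (the generator $x_i$ must commute with every participant of the relation containing $\{j,k\}$ other than $x_j,x_k$, else $Gr(G)$ would acquire a second cycle, Figure \ref{Case2n}(b)) and $[x_k,Y]=[x_k,Y']=[x_k,Y'']=e$ in the $3$-cycle case (Figure \ref{Case3n}(b)). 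These commutations are precisely what allow the conjugation-and-cancellation computations to close up. One sound kernel in your proposal is the observation that identities holding modulo $G_3$ may be substituted into the outer bracket modulo $G_4$ (as in Proposition \ref{prsG3G4}); pushed systematically, this would let one run the argument at the linearized (holonomy Lie algebra) level, which is tidier than the paper's word computations --- but even there the same three-case analysis via the Jacobi identity and the same $\beta \leq 1$ commutation facts are indispensable, and your proposal supplies neither.
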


\begin{proof}
Assume that $G$ is generated by $n$  generators $x_1,\ldots,x_n$.
In order to prove that\linebreak $\phi_3(G) \leq \sum_{i \geq 3}n_i\omega_3(i-1)$, we  prove that every \emph{non}-local generator of the form $x_{j,i,k} = [[x_j,x_i],x_k]$ (where $j>i$ and $k \geq i$) is equivalent (modulo $G_4$) to the identity element $e$. From now on, we denote by $\equiv$ equalities that take place in $G_3$, modulo the group $G_4$.

\medskip
We prove this claim by splitting our proof into cases.
If the generators  $x_i,x_j$ commute, then $x_{j,i}=e$ and thus $x_{j,i,k} = e$. Assume thus that the generators $x_j,x_i$ participate in the same multiple relation $R_1$, and let $v_1$ be the corresponding vertex to this relation in $Gr(G)$. Explicitly, as $j>i$,
$$
R_1 = R_{1,(y''_{k''},\ldots, y''_0,j,y'_{k'},\ldots,y'_0,i,y_k,\ldots,y_0)}.
$$

\begin{remark} \label{remEmptySet}
\rm{Note that some (but not all) of the sets $\{y_0,\ldots,y_k\}, \{y'_0,\ldots,y'_{k'}\}, \{y''_0,\ldots,y''_{k''}\}$ may be empty.}
\end{remark}

 If $x_{i,k}=x_{j,k}=e$, that is, the generator $x_k$ commutes with the generators
$x_i$ and $x_j$, then $x_{j,i,k} = e$. We therefore examine the cases when the generator $x_k$ participates with at least one of the generators, $x_i$ or $x_j$, in the same multiple relation. We consider three cases: when $x_{i,k}=e$ but $x_{j,k} \neq e$; when $x_{j,k}=e$ but $x_{i,k} \neq e$; and when $x_{i,k} \neq e$ and $x_{j,k} \neq e$.

\medskip
\noindent

\textbf{First case}: In this case $x_k, x_i$ commute, i.e.  $x_{k,i}=e$, and $x_k, x_j$ participate in the same multiple relation $R_2$; let $v_2$ be the corresponding vertex to this relation in $Gr(G)$ (see Figure \ref{Case1n}).

\begin{figure}[h!]
\epsfysize 0.7cm
\epsfbox{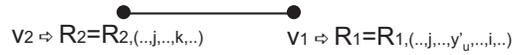}
\caption{Part of the graph $Gr(G)$: The generators $x_i, x_j$ participate in a multiple relation $R_1$ (to whom the vertex $v_1$ is associated) and the generators $x_k, x_j$ participate in a multiple relation $R_2$ (to whom the vertex $v_2$ is associated when we assume that $k>j$). The generator $x_{y'_u}$ is a another generator participating in $R_1$.}
\label{Case1n}
\end{figure}

\noindent
 Let us prove that  $x_{j,i,k}\equiv e$.
\begin{equation} \label{eqnIniEqn}
x_{j,i,k}^{-1} \overset{Eqn. (\ref{eqnXijk})}{\equiv} x_{i,j,k} = x_j^{-1}x_i^{-1}x_jx_ix_k^{-1}x_i^{-1}x_j^{-1}x_ix_jx_k \overset{x_{k,i}=e}{=}
x_j^{-1}x_i^{-1}x_jx_k^{-1}x_j^{-1}x_ix_jx_k.
\end{equation}
\noindent

Denote $Y = x_{y_k}\cdot \ldots \cdot x_{y_0}, Y' = x_{y'_{k'}}\cdot \ldots \cdot x_{y'_0}, Y'' = x_{y''_{k''}}\cdot \ldots \cdot x_{y''_0}$.
Therefore, since $G$ is conjugation-free, by Definition \ref{defConjFree} part of the relations of $R_1$ are the following:

\begin{equation} \label{relPointY}
Y''x_jY'x_iY = x_jY'x_iYY'' = Y'x_iYY''x_j = x_iYY''x_jY' = YY''x_jY'x_i.
\end{equation}

\begin{remark} \label{remConj}
\emph{Let us note that if $a \in G_3$ then for every $b \in G$, $a \equiv b^{-1}ab $ (mod($G_4$)), that is, we can conjugate an element in $G_3$ by any element in $G$ and remain in the same conjugacy class.}
\end{remark}

We now consider two cases: either that $[x_k,Y] = [x_k,Y'] = [x_k,Y''] = e$ or that at least one of these equalities does not hold.

 \medskip

\textbf{Case (1)}: Assume that $[x_k,Y] = [x_k,Y'] = [x_k,Y''] = e$. This happens when all the
 generators participating in $R_1$, except the generator $x_j$, commute with $x_k$ (for example, this situation occurs when the vertex $v_1$ is not a part of the cycle of the graph  $Gr(G)$).

By Remark \ref{remConj}, we can conjugate $x_{j,i,k}^{-1}$ by any element. Thus, following Equation (\ref{eqnIniEqn}):
\begin{equation} \label{eqnXjik_1}
x_{j,i,k}^{-1} \overset{Conj.\,\,by\,\,YY''Y'}{\equiv} (YY''Y')^{-1}\cdot x_j^{-1}x_i^{-1}x_jx_k^{-1}x_j^{-1}x_ix_jx_k\cdot YY''Y'
\end{equation}
$$
\overset{[x_k,YY''Y']=e}{=} (YY''Y')^{-1}\cdot x_j^{-1}x_i^{-1}x_jx_k^{-1}x_j^{-1}x_ix_j\cdot YY''Y'\cdot x_k.
$$
We now examine the following expression: $x_ix_j\cdot YY''Y'$.
$$
x_ix_j\cdot YY''Y'= x_iYY''x_j[x_j,YY'']Y' = x_iYY''x_jY'[x_j,YY''][[x_j,YY''],Y'] $$$$
\overset{Rel. (\ref{relPointY})}{=} x_jY'x_iYY''[x_j,YY''][[x_j,YY''],Y'] =
x_jY'x_iYY''[x_j,YY'']m,
$$
where we denote $m = [[x_j,YY''],Y']$. Note that $m \in G_3$ and therefore commutes in $G_3/G_4$ with any other element that belongs to $G$. Thus, Equation (\ref{eqnXjik_1}) turns to:
\begin{equation} \label{eqnXjik_2}
x_{j,i,k}^{-1} \equiv  m^{-1}[x_j,YY'']^{-1}Y''^{-1}Y^{-1} x_i^{-1} Y'^{-1} x_j^{-1} \cdot x_jx_k^{-1}x_j^{-1} \cdot x_jY'x_iYY''[x_j,YY''] m x_k \end{equation}
$$ \equiv [x_j,YY'']^{-1}Y''^{-1}Y^{-1} x_i^{-1} Y'^{-1}x_k^{-1}Y'x_iYY''[x_j,YY'']x_k.
$$
Since $[x_k,x_i]  = [x_k,Y] = [x_k,Y'] = [x_k,Y''] = e$,  then Equation (\ref{eqnXjik_2}) becomes:
$$
x_{j,i,k}^{-1} \equiv [x_j,YY'']^{-1}x_k^{-1}[x_j,YY'']x_k.
$$
Denoting $\y = YY''$, we get:
$$
x_{j,i,k}^{-1} \equiv  x_{j,\y,k} \overset{Eqn. (\ref{eqnXijk})}{\equiv} x^{-1}_{\y,j,k}.
$$
Thus:
\begin{equation} \label{eqnXjik_3}
x_{j,i,k} \equiv x_{\y,j,k} = x_j^{-1} \y^{-1} x_j\y x_k^{-1}\y^{-1}x_j^{-1}\y x_jx_k \overset{x_{k,\y} = e,\, Conj.\,\,by\,\,Y'x_i}{\equiv}\end{equation}
$$
(Y'x_i)^{-1}\cdot x_j^{-1} \y^{-1} x_j x_k^{-1}  x_j^{-1} \y x_j x_k \cdot Y'x_i
\overset{x_{k,Y'} = x_{k,i} = e}{=} (Y'x_i)^{-1} x_j^{-1} \y^{-1} x_j x_k^{-1}  x_j^{-1} \y x_j  Y'x_i x_k.
$$
\noindent
Now, $ \y x_j Y' x_i  \overset{Rel. (\ref{relPointY})}{=} x_j Y' x_i \y$.
Therefore,  Equation (\ref{eqnXjik_3}) becomes:
$$
x_{j,i,k} \equiv (x_j Y' x_i \y)^{-1} \cdot x_j x_k^{-1}  x_j^{-1} \cdot x_j Y' x_i \y x_k = \y^{-1} x_i^{-1} Y'^{-1}  x_k^{-1}  Y' x_i \y x_k = e,
$$
when in the last equality we use the fact that $x_k$ commutes with all the other terms in the product.

 \medskip

\textbf{Case (2)}: Let us now check when $[x_k,y_0] \neq e$, when $y_0 \in \{Y,Y',Y''\}$. This happens when the generator $x_k$ participates in a multiple relation $R_3$, where $R_3 \neq R_2$, and one of the generators in the set $\{x_v\}_{v \in \mathcal{Y}}$, where
 $\mathcal{Y} = \{y_0,\ldots,y_k\} \cup \{y'_0,\ldots,y'_{k'}\} \cup \{y''_0,\ldots,y''_{k''}\}$,
 also participates in $R_3$ (see Figure \ref{Case2n}(a)). Note that $x_k$ participates in (at least) two multiple relations: $R_2$ and $R_3$.

 \begin{figure}[h!]
\epsfysize 4cm
\epsfbox{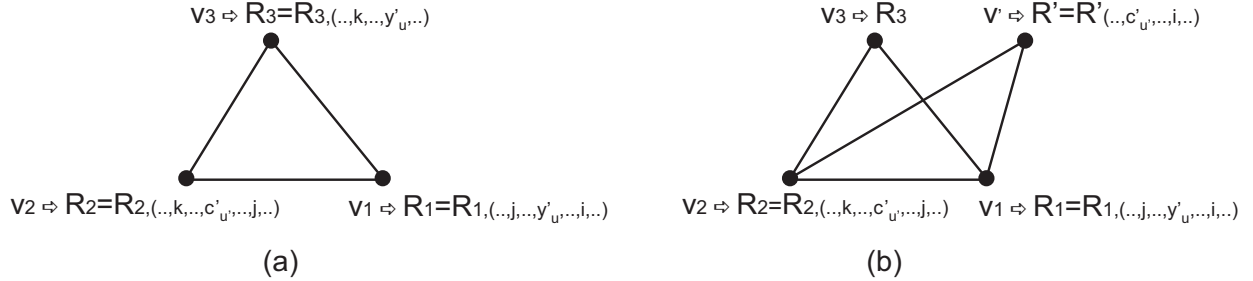}
\caption{Part (a): a part of the graph $Gr(G)$:
The generators $x_i, x_j$ participate in a multiple relation $R_1$ (to whom the vertex $v_1$ is associated) and the generator $x_k$
 participates in two multiple relations $R_2, R_3$ (to whom the vertices $v_2,v_3$ are associated, when we assume that $k>j$). The generator $x_{y'_u}$ is another generator participating in $R_1$ and $R_3$, and $x_{c'_{u'}}$ is another generator participating in $R_2$. Part (b): a forbidden case when $x_{c'_{u'}}$ and $x_i$ participate in the same multiple relation $R'$. In this case, $\beta(Gr(G))>1$.}
\label{Case2n}
\end{figure}

Assume that $k>j$ (the proof for $k<j$ is similar and we leave it to the reader; note that if $k=j$ then $x_{j,i,j}$ is a local generator).
Examining closely $R_2$, we see that:

$$
R_2 = R_{2,(c''_{k''},\ldots, c''_0,k,c'_{k'},\ldots,c'_0,j,c_k,\ldots,c_0)}.
$$

\noindent
(when we take into consideration Remark \ref{remEmptySet} with respect to the sets $\{c_i\}, \{c'_{i'}\}, \{c''_{i''}\}$).
Denote \\$C=x_{c_k}\cdot \ldots \cdot x_{c_0}, C' = x_{c'_{k'}}\cdot \ldots \cdot x_{c'_0}, C'' = x_{c''_{k''}}\cdot \ldots \cdot x_{c''_0}$.
By Definition \ref{defConjFree}, part of the relations of $R_2$ are the following:

\begin{equation} \label{relPointC}
C''x_kC'x_jC = x_kC'x_jCC'' = C'x_jCC''x_k = x_jCC''x_kC' = CC''x_kC'x_j.
\end{equation}

By the Hall-Witt identity in $G_3/G_4$, we get that $[[x_j,x_i],x_k]\cdot [[x_i^{-1},x_k^{-1}],x_j]\cdot[[x_k,x_j^{-1}],x_i^{-1}] \equiv e$.
As $[x_i,x_k]=e$, we get that $[[x_j,x_i],x_k]\cdot[[x_k,x_j^{-1}],x_i^{-1}] \equiv e$. It is easy to see that in $G_3/G_4$,
$[[x_k,x_j^{-1}],x_i^{-1}] \equiv [[x_j,x_k],x_i]^{-1}$, and thus $x_{j,i,k} = [[x_j,x_i],x_k] \equiv [[x_j,x_k],x_i] = x_{j,k,i}$.
Since $x_{j,k,i} \overset{Eqn. (\ref{eqnXijk})}{\equiv} x_{k,j,i}^{-1}$, we prove that $x_{k,j,i} \equiv e.$

\begin{equation} \label{eqnXkji_1}
x_{k,j,i} = x_j^{-1}x_k^{-1}x_jx_kx_i^{-1}x_k^{-1}x_j^{-1}x_k x_j x_i \overset{x_{k,i}=e,\,Conj.\,\,by\,\,C'CC''}{\equiv}\end{equation}$$
(C'CC'')^{-1}\cdot x_j^{-1}x_k^{-1}x_jx_i^{-1}x_j^{-1}x_k x_j x_i \cdot C'CC''.$$

Let $\mathcal{C} = \{c_0,\ldots,c_k\} \cup \{c'_0,\ldots,c'_{k'}\} \cup \{c''_0,\ldots,c''_{k''}\}$.
Note that $x_i$ commutes with every generator in the set $\{x_v\}_{v \in \mathcal{C}}$, since otherwise $\beta(Gr(G))$ would be greater than $1$. Indeed, if both $x_i$ and $x_{c}$ participated in the same multiple relation $R'$ for an index $c \in \mathcal{C}$, then $Gr(G)$ would have (at least) two cycles (see Figure \ref{Case2n}(b)). Therefore $[x_i,C] = [x_i,C'] = [x_i,C''] =  e$ and

\begin{equation}\label{eqnXkji_11}
x_{k,j,i} \equiv (C'CC'')^{-1}\cdot x_j^{-1}x_k^{-1}x_jx_i^{-1}x_j^{-1}x_k x_j  \cdot C'CC'' \cdot x_i.
\end{equation}
Let us examine the expression $x_k x_j  C'CC''$:

$$x_k x_j  C'CC'' =  x_k  C' x_j [x_j,C'] CC''  \overset{t \doteq [[x_j,C'],CC'']}{=}$$
$$
 x_k  C' x_j CC''[x_j,C'] t  \overset{Rel. (\ref{relPointC})}{=}  x_j CC'' x_k C' [x_j,C'] t .
$$
Note that $t \in G_3$ and thus, in $G_3/G_4$, it commutes with any other element.
Therefore, Equation (\ref{eqnXkji_11}) becomes:

\begin{equation} \label{eqnXkji_2}
x_{k,j,i} \equiv t^{-1} [x_j,C']^{-1} C'^{-1} x_k^{-1} C''^{-1} C^{-1} x_j^{-1} \cdot  x_jx_i^{-1}x_j^{-1}\cdot  x_j C C'' x_k C' [x_j,C'] t x_i
\end{equation}
$$
\equiv [x_j,C']^{-1} C'^{-1} x_k^{-1} C''^{-1} C^{-1}  x_i^{-1}  C C'' x_k C' [x_j,C']  x_i = [x_j,C']^{-1}  x_i^{-1}[x_j,C']  x_i,
$$
when in the last equality we used that $[x_i,x_k] = [x_i,C] = [x_i,C'] = [x_i,C''] =  e$.
Thus: $$x_{k,j,i} \overset{Eqn.\,(\ref{eqnXkji_2})}{\equiv} x_{j,C',i} \overset{Eqn.\,(\ref{eqnXijk})}{\equiv} x_{C',j,i}^{-1} \Rightarrow$$
$$
x_{C',j,i} = x_j^{-1}C'^{-1}x_jC' x_i^{-1} C'^{-1} x_j^{-1} C' x_j x_i \overset{[x_i,C']=e,\,Conj.\,\,by\,\,CC''x_k}{\equiv}$$$$
(CC''x_k)^{-1} \cdot x_j^{-1}C'^{-1}x_j  x_i^{-1}  x_j^{-1} C' x_j x_i \cdot CC''x_k =
$$
$$
\overset{[x_i,CC'']=[x_i,x_k]=e}{=} (x_k^{-1}C''^{-1}C^{-1} x_j^{-1}C'^{-1})x_j  x_i^{-1}  x_j^{-1} (C' x_j  CC''x_k)  x_i = e,
$$
\noindent
where in the last equality we used the relation:
$$
 C' x_j   CC'' x_i x_k \overset{Rel. (\ref{relPointC})}{=} x_j CC''kC'
$$
and then the fact that $x_i$ commutes with $C,C',C''$ and $x_k$.

\medskip
\noindent
\textbf{Second case}: In this case $x_k, x_j$ commute, i.e. $x_{j,k}=e$, and $x_k, x_i$ participate in the same multiple relation $R_4$, whose associated vertex in $Gr(G)$ is denoted by $v_4$.
The proof that $x_{j,i,k} \equiv e$ is similar to the proof presented in the First case. Thus, we only outline the main steps and leave the detailed checks to the reader. We use the same notations, regarding the relation $R_1$ and generators participating in it, that appeared in the First case.
Again, we  consider two cases: either that $[x_k,Y] = [x_k,Y'] = [x_k,Y''] = e$ or that at least one of these equalities is not true.

 \medskip

\textbf{Case (1)}: Assuming that $[x_k,Y] = [x_k,Y'] = [x_k,Y''] = e$, we conjugate $x_{j,i,k}$ by $Y'YY''$ and get, using Relation (\ref{relPointY}), that
$x_{j,i,k} \equiv x_{i,Y',k}$. Thus $x_{j,i,k} \equiv x_{Y',i,k}^{-1}$. Conjugating $x_{Y',i,k}$ by $YY''x_j$ and again using Relation (\ref{relPointY}), we see that $x_{Y',i,k} \equiv e$.

\textbf{Case (2)}: This case happens when the generator $x_k$ participates in another multiple relation $R_3$, where $R_4 \neq R_3$, and in $R_3$ participate also one of the generators in the set $\{x_v\}_{v \in \mathcal{Y}}$, where
$\mathcal{Y} = \{y_0,\ldots,y_k\} \cup \{y'_0,\ldots,y'_{k'}\} \cup \{y''_0,\ldots,y''_{k''}\}$
 (see Figure \ref{Case4n}).

 \begin{figure}[h!]
\epsfysize 4.5cm
\epsfbox{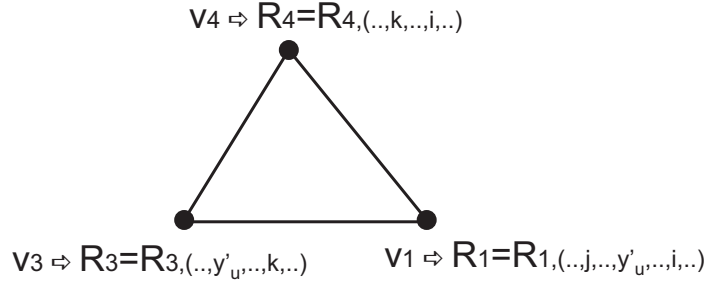}
\caption{
a portion of the graph $Gr(G)$:
The generators $x_i, x_j$ participate in a multiple relation $R_1$ (to whom the vertex $v_1$ is associated) and the generator $x_k$
 participates in two multiple relations $R_3, R_4$ (to whom the vertices $v_3,v_4$ are associated). The generator $x_{y'_u}$ is  another generator participating in $R_1$ and $R_3$.}
\label{Case4n}
\end{figure}

\noindent
Note that it is known that $i<k$. Examining closely $R_3$, we see that:

$$
R_3 = R_{3,(z''_{k''},\ldots, z''_0,k,z'_{k'},\ldots,z'_0,i,z_k,\ldots,z_0)}.
$$
(when we take into consideration Remark \ref{remEmptySet} with respect to the sets $\{z_i\}, \{z'_{i'}\}, \{z''_{i''}\}$).
Denote\\ $Z = x_{z_k}\cdot \ldots \cdot x_{z_0}, Z' = x_{z'_{k'}}\cdot \ldots \cdot x_{z'_0}, Z'' = x_{z''_{k''}}\cdot \ldots \cdot x_{z''_0}$.
Therefore, by Remark \ref{RemRelCyclic}, part of the relations of $R_3$ are the following:

\begin{equation} \label{relPointZ}
Z''x_kZ'x_iZ = x_kZ'x_iZZ'' = Z'x_iZZ''x_k = x_iZZ''x_kZ' = ZZ''x_kZ'x_i.
\end{equation}
\noindent
Again, by the Hall-Witt identity in $G_3/G_4$, we  get that $x_{j,i,k} \equiv x^{-1}_{i,k,j}$ and therefore $x_{j,i,k} \equiv x_{k,i,j}$.
Noting that $[x_j,Z] = [x_j,Z'] = [x_j,Z''] = e$ (otherwise $\beta(Gr(G))>1$), we conjugate $x_{k,i,j}$ by $Z'ZZ''$, and using Relation (\ref{relPointZ}), we get that
$x_{k,i,j} \equiv x_{i,Z',j}$. Thus $x_{k,i,j} \equiv x_{Z',i,j}^{-1}$. Conjugating $x_{Z',i,j}$ by $ZZ''x_k$ and again using Relation (\ref{relPointZ}), we get that $x_{Z',i,j} \equiv e$.

\medskip
\noindent

\textbf{Third case}: In this case $x_{i,k} \neq e$ and $x_{j,k} \neq e$. There are two cases when this  may occur: the first is when all of the three generators
$x_i,x_j,x_k$ participate in the same multiple relation. In this case, the generator $x_{j,i,k} \in G_3/G_4$ is a local
generator, so we are done. The second case, of a cycle of length $3$ in the graph $Gr(G)$, occurs when the vertices of the cycle correspond to the different \emph{multiple} relations $R_1,R_2$ and $R_4$, where $x_i,x_j$ participate in $R_1$, $x_k,x_j$ participate in $R_2$ and
$x_i,x_k$ participate in $R_4$ (see Figure \ref{Case3n}(a)).

 \begin{figure}[h!]
\epsfysize 4cm
\epsfbox{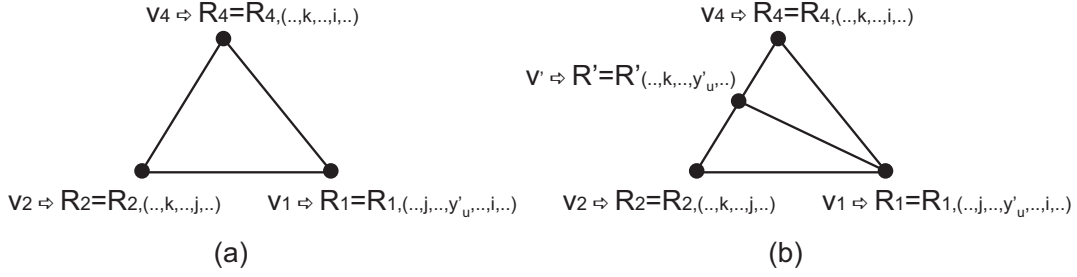}
\caption{Part (a): A cycle of length three: all pairs of generators participate in different multiple relations. The generator $x_{y'_u}$ participates in $R_1$. Part (b): a forbidden case when $x_{y'_u}$ and $x_k$ participate in the same multiple relation $R'$. In this case, $\beta(Gr(G))>1$.}
\label{Case3n}
\end{figure}

\noindent
We use the notations of the First case, i.e. that
$$
R_1 = R_{1,(y''_{k''},\ldots, y''_0,j,y'_{k'},\ldots,y'_0,i,y_k,\ldots,y_0)}.
$$
We also use the notations $Y,Y',Y''$. Note that $[x_k,Y] = [x_k,Y'] = [x_k,Y'']  = e$, as otherwise $x_k$ would participate in (at least) three multiple relations
and $\beta(Gr(G)) > 1$ (see Figure \ref{Case3n}(b)).
Thus:

\begin{equation} \label{eqnThirdCase_1}
x_{j,i,k} = x_i^{-1}x_j^{-1}x_ix_jx_k^{-1}x_j^{-1}x_i^{-1}x_jx_ix_k \overset{Conj.\,\,by\,\,Y'YY'',\,[x_k,Y'YY'']=e}{\equiv}\end{equation}
$$
\equiv (Y'YY'')^{-1}x_i^{-1}x_j^{-1}x_ix_jx_k^{-1}x_j^{-1}x_i^{-1}x_jx_i Y'YY''  x_k.
$$
\noindent
Let us examine the expression $x_jx_i Y'YY''$.
$$
x_jx_i Y'YY'' = x_j Y' x_i [x_i,Y'] YY'' = x_j Y' x_i YY'' [x_i,Y'][[x_i,Y'],YY''] \overset{Rel. (\ref{relPointY})}{=} x_i YY'' x_j Y' [x_i,Y'] r,
$$
where we set $r = [[x_i,Y'],YY''] \in G_3$, which commutes in $G_3/G_4$ with every other element. Thus Equation (\ref{eqnThirdCase_1}) becomes:

\begin{equation} \label{eqnThirdCase_2}
x_{j,i,k} \equiv   r^{-1}[x_i,Y']^{-1}Y'^{-1} x_j^{-1} Y''^{-1}Y^{-1} x_i^{-1}   \cdot x_ix_jx_k^{-1}x_j^{-1}x_i^{-1} \cdot x_i YY'' x_j Y' [x_i,Y'] r x_k \equiv \end{equation} $$ [x_i,Y']^{-1}Y'^{-1} x_j^{-1} Y''^{-1}Y^{-1}  x_jx_k^{-1}x_j^{-1}  YY'' x_j Y' [x_i,Y']  x_k.$$

\noindent
Noting that $[x_i,Y']  x_k = x_k [x_i,Y'] [[x_i,Y'],x_k] \overset{x_{i,Y',k}\in G_3}{\equiv} x_k [[x_i,Y'],x_k]  [x_i,Y'],$ we
denote  $$f = Y'^{-1} x_j^{-1} Y''^{-1}Y^{-1}  x_jx_k^{-1}x_j^{-1}  YY'' x_j Y' x_k,$$
\noindent
and get from Equation (\ref{eqnThirdCase_2}) that:
\begin{equation} \label{eqnThirdCase_3}
x_{j,i,k} \equiv   (f \cdot  [[x_i,Y'],x_k])^{[x_i,Y']} \overset{x_{j,i,k \in G_3,\,Rem.\, \ref{remConj}}}{\equiv} f \cdot  [[x_i,Y'],x_k].
\end{equation}

\noindent
Since $x_{j,i,k},[[x_i,Y'],x_k] \in G_3/G_4$, then also $f \in G_3/G_4$. Denoting $\y = YY''$, we examine the expression $f$:
$$
f \overset{Conj.\,\,by\,\,x_i}{\equiv} x_i^{-1} Y'^{-1} x_j^{-1} \y^{-1}  x_jx_k^{-1}x_j^{-1}  \y x_j Y' x_k x_i =
x_i^{-1} Y'^{-1} x_j^{-1} \y^{-1}  x_jx_k^{-1}x_j^{-1} \cdot  \y x_j Y'   x_i \cdot x_k [x_k,x_i].
$$
\noindent
Now,
$$
 \y x_j Y' x_i  \overset{Rel. (\ref{relPointY})}{=}
x_j Y' x_i \y.
$$
Thus,
$$
f \equiv \y^{-1}  x_i^{-1} Y'^{-1} x_j^{-1} \cdot x_jx_k^{-1}x_j^{-1} \cdot x_j Y' x_i \y x_k [x_k,x_i] =
   \y^{-1}  x_i^{-1} Y'^{-1}  x_k^{-1}  Y' x_i \y x_k [x_k,x_i] \overset{[x_k,Y'] = [x_k,\y] = e}{=} $$$$
    \y^{-1}  x_i^{-1}   x_k^{-1}   x_i  x_k \y [x_k,x_i] = \y^{-1} [x_k,x_i]^{-1}\y [x_k,x_i] = x^{-1}_{k,i,\y}.
$$
Moreover, using the Hall-Witt identity in $G_3/G_4$ (as in the First case), we see that $[[x_i,Y'],x_k] \equiv [[x_i,x_k],Y']$ and thus
$[[x_i,Y'],x_k] \equiv x^{-1}_{k,i,Y'}$.
Therefore, Equation  (\ref{eqnThirdCase_3}) becomes
$$
x_{j,i,k} \equiv  x^{-1}_{k,i,\y} \cdot x^{-1}_{k,i,Y'}.
$$
By Proposition \ref{prsG3G4}(II) (see Equation (\ref{eqnEqG3G4})), the expression on the right hand side is a product of inverses of terms of the form $x_{k,i,y}$, where
$y \in \mathcal{Y} \doteq \{y_i\} \cup \{y'_{i'}\} \cup \{y''_{i''}\}$. However, as $x_k$ commutes with $x_y$ (for every $y \in \mathcal{Y}$), this case is already handled in the  Second case.
Therefore $x_{j,i,k} \equiv e$.
\end{proof}

\begin{remark}\label{remPhi3For}
\emph{Obviously there are cyclic-related groups, which may not be conjugation-free, whose $\phi_3$ is not bounded by the above upper bound. For example, for the pure braid group $BP_4$  it is known that $\phi_3 = 10$ (the ranks of the lower central series of the pure braid group were first studied by Kohno \cite{K2}). Moreover, it has a cyclic-related presentation, under which it is generated by $6$ generators with $4$ cyclic relations of length $3$ and $3$ cyclic relations of length $2$ (see \cite{MM}). However, were $BP_4$ a conjugation-free group with $\beta(Gr(PB_4)) \leq 1$, then $\phi_3 \leq 8$.}
\end{remark}

 \begin{remark} \label{remRelCond}
\rm{ The restriction on the conjugation-free group $G$ in Theorem \ref{ThmMain}, i.e. that \linebreak $\beta(Gr(G))~\leq~1$, can be relaxed. Assume that $\beta(Gr(G))=1$
and that the length of the cycle is $t$. Let $v_1,\ldots,v_t$ be the vertices of the graph on the cycle, corresponding to the multiple relations
$R_1,\ldots, R_t$. Let $e_{i_1}, e_{i_2}, \ldots, e_{i_t}$ be the edges of the cycle, when $e_{i_j}$ connects the vertices $v_j$ and $v_{j+1(\text{mod }t)}$.
Every edge $e_{i_j}$, $1 \leq j \leq t$ corresponds to a generator $x_{i_j}$ which participates both in $R_j$ and in $R_{j+1(\text{mod }t)}$. Let $X_0 = \{x_{i_1},\ldots, x_{i_t}\}$. For each relation $R_i$, $2 \leq i \leq t$, let $X_{p_i}$ be the set of generators of $G$ participating in $R_i$, and let $X_2 = \cup_{i=2}^t X_{p_i}$. Denote  $X_1 = X_2 - X_0$.

  By examining the proof of Theorem \ref{ThmMain}, note  that if a generator $x_r$ participates, for example, in $R_1$, then in order to prove that a non-local generator $x_{j,i,k}$ is equivalent to $e$, when one of the indices $j,i$ or $k$ is $r$, we only have to demand that $x_r$ commutes with all the generators in $X_1$.

  This means that in order that Theorem \ref{ThmMain} could be applied, we can omit the requirement that $\beta(Gr(G)) \leq 1$ and require that if there are a few cycles (where $v_1$ is a vertex, corresponding to the relation $R_1$, in a cycle composed of the vertices $v_1,\ldots,v_t$ and $x_r$ participates in $R_1$), then $x_r$ can participate in other multiple relations $R$ such that $R \neq R_i$, $1 \leq i \leq t$, as long as in this multiple relation $R$, there are no other generators,  belonging to $X_1$, which participate in it.
  }
 \end{remark}

 \begin{definition}
\emph{ A planar, simple connected graph $H$ is called \emph{cycle-separated} if  every two cycles of $H$ do not have a vertex in common and all the edges of $H$ are straight segments.
} \end{definition}
\noindent
Obviously, a graph $H$ with $\beta(H) \leq 1$ is cycle-separated. An example to a cycle-separated graph with two cycles is presented in Figure \ref{cycleSep}(a).

 \begin{figure}[h!]
\epsfysize 2cm
\epsfbox{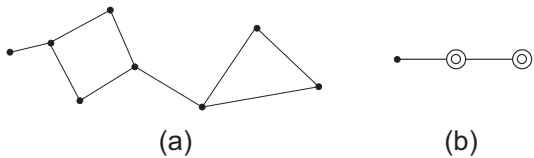}
\caption{Part (a): An example to a cycle-separated graph $H$. Part (b): The graph $H$ after the identification of its cycles into cycled-vertices.}
\label{cycleSep}
\end{figure}

\begin{remark} \label{remGenFormCycSep}
\rm{Let us find the general form of a cycle-separated graph. Given a cycle in a cycle-sparated graph, we identify all its vertices and edges connecting them and call the resulting object a \emph{cycled-vertex}, depicted by $\circledcirc$ (in contrast to ordinary vertices, which are depicted by $\bullet$). For example, after this identification, the graph $H$ in Figure \ref{cycleSep}(a) is transformed to the graph presented in Figure \ref{cycleSep}(b).

We claim that given a cycle-separated graph, after this identification, the new graph will be a tree (whose vertices are either  $\circledcirc$ or $\bullet$). Indeed, if there is a cycle in the new graph, then the original graph would have two cycles with common vertices.
}
\end{remark}

  Thus we have the following:
\begin{prs} \label{prsCycSepG3G4}
If $G$ is a conjugation-free group whose graph is a disjoint
union of
cycle-separated graphs, then
$$
\phi_3(G) \leq \sum_{i \geq 3} n_i\omega_3(i-1).
$$
\end{prs}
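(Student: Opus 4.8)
The plan is to reduce the statement to the single-cycle case already settled in Proposition \ref{prsFirstPart}, exploiting the relaxation described in Remark \ref{remRelCond}. First I would dispose of the disjointness: since $Gr(G)$ is by hypothesis a disjoint union of (connected) cycle-separated graphs, the iterated form of Example \ref{exmGraphCycRel}(3) writes $G$ as a direct sum $G \cong \bigoplus_s G^{(s)}$, where each $G^{(s)}$ is conjugation-free with $Gr(G^{(s)})$ a single cycle-separated graph. Because $gr(-)$ is additive on direct sums, $\phi_3(G) = \sum_s \phi_3(G^{(s)})$, while the length-$i$ relation counts are additive, $n_i = \sum_s n_i(G^{(s)})$; so it suffices to prove the bound when $Gr(G)$ is connected and cycle-separated.

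The core step is to check that a connected cycle-separated graph satisfies the relaxed hypothesis of Remark \ref{remRelCond}, after which the proof of Proposition \ref{prsFirstPart} applies unchanged. Concretely, fix a cycle $C=(v_1,\dots,v_t)$ with relations $R_1,\dots,R_t$ and adopt the notation $X_0,X_1,X_2$ of that remark. I would argue by contradiction: suppose a generator $x_r$ participating in $R_1$ also participates in a multiple relation $R\notin\{R_1,\dots,R_t\}$, and that some $x_s\in X_1$ participates in $R$ as well. Then $x_s$ lies in a cycle-relation $R_m$ ($2\le m\le t$) but is not a cycle-edge generator, so the shared generators $x_r\in R_1\cap R$ and $x_s\in R_m\cap R$ yield two edges $v_1-v_R$ and $v_m-v_R$ of $Gr(G)$; joining them by either arc of $C$ from $v_1$ to $v_m$ produces a cycle $C'$ through the new vertex $v_R\notin C$. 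Thus $C$ and $C'$ are distinct cycles sharing the vertex $v_1$, contradicting cycle-separatedness (equivalently, contradicting the tree structure of Remark \ref{remGenFormCycSep}). Hence no such $x_s$ exists and the relaxed condition holds.

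With this verified, I would run the case analysis of Proposition \ref{prsFirstPart} to show that every non-local generator $x_{j,i,k}$ of $G_3/G_4$ is trivial modulo $G_4$. Each appeal there to ``otherwise $\beta(Gr(G))>1$'' --- namely in the First case, Case (2), where $x_i$ must commute with all generators of $R_2$, and in the Third case, where $x_k$ must commute with $Y,Y',Y''$ --- is now licensed by exactly the combinatorial lemma above: a failure of the required commutation again exhibits two cycles with a common vertex. Consequently $G_3/G_4$ is generated by local generators, and by Example \ref{exmNlines} each length-$i$ multiple relation contributes at most $\omega_3(i-1)$ independent ones, giving $\phi_3(G)\le\sum_{i\ge 3} n_i\,\omega_3(i-1)$.

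I expect the main obstacle to be the careful bookkeeping in this last step: confirming that every invocation of $\beta\le 1$ in Proposition \ref{prsFirstPart} corresponds precisely to a forbidden ``two cycles through a common vertex'' pattern, and treating the degenerate possibilities (for instance $x_r=x_s$, which would force a chord of $C$ and hence two cycles sharing more than one vertex, or $R$ accidentally meeting $C$ in a second vertex). Each such degeneracy should again violate cycle-separatedness, so the argument closes, but making this exhaustive is where the real work lies.
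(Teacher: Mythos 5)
Your proposal is correct and follows essentially the same route as the paper, which likewise reduces the disjoint-union case via Example \ref{exmGraphCycRel}(3), asserts that a cycle-separated graph satisfies the relaxed conditions of Remark \ref{remRelCond}, and then invokes Proposition \ref{prsFirstPart}. In fact you supply more detail than the paper does: your explicit ``two cycles through a common vertex'' contradiction (including the degenerate cases $x_r=x_s$ and chords of $C$) is exactly the verification the paper leaves implicit in its one-line claim that the relaxed conditions hold.
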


\begin{proof}
 The group $G$, whose graph is cycle-separated, satisfies the (relaxed) conditions posed in Remark \ref{remRelCond}, and since $G$ is conjugation-free, Proposition \ref{prsFirstPart} can be applied for the group $G$. By Example \ref{exmGraphCycRel}(3),  this claim is true when $Gr(G)$ is a disjoint union of cycle-separated graphs.
\end{proof}

\section{Line arrangements and conjugation-free graphs} \label{sectionLineArrCFgroup}

In this section we prove that given a conjugation-free group $G$ such that $Gr(G)$ is a cycle-separated graph, $\phi_3(G) = \sum_{i \geq 3} n_i\omega_3(i-1)$, and in fact $\phi_k(G) = \sum_{i \geq 3} n_i\omega_k(i-1)$ for every $k \geq 3$. In order to do so, we have to use tools arising from the theory of line arrangement. For the reader who is not familiar with line arrangements, let us outline the main constructions of this section.

 In section \ref{subsecAssocLineArr} we associate for every conjugation-free group $G$ (with an associated cycle-separated graph) a line arrangement $\LL(G)$ in $\CC^2$ (see Definition \ref{defLineArrAss}), later showing in Section \ref{subsecPi1CompLine} that the fundamental group $\pi_1(\CC^2 - \LL(G))$ is isomorphic to $G$. In Section \ref{subsecLCSLineArr} we see that given any line arrangement $\LL$, it is known by Falk \cite{Falk2} that $\phi_3(\pi_1(\CC^2 - \LL)) \geq \sum_{i \geq 3} n_i\omega_3(i-1)$, where $n_i$ is the number of the singular points of $\LL$ with multiplicity $i$. Combining this with Proposition \ref{prsCycSepG3G4}, we get that  $\phi_3(G) = \sum_{i \geq 3} n_i\omega_3(i-1)$. Finally, Papadima-Suciu \cite[Theorem 2.4]{PS} shows that if $\phi_3$ attains this equality, then $\phi_k$ attains it for every $k \geq 3$.

 \subsection{The associated line arrangement} \label{subsecAssocLineArr}

An {\it affine line arrangement} in $\CC^2$ is a union of copies of $\C^1$ in $\C^2$. Such an arrangement is called {\em real} if the defining equations of all its lines are written with real coefficients, and {\em complex} otherwise. A \emph{multiple point} $p \in \LL$ is an intersection point where more than two lines intersect (i.e. an intersection point with intersection multiplicity greater than $2$).

 We construct an affine  line arrangement, associated to a conjugation-free group.

 \begin{definition} \label{defLineArrAss}
 \rm{ Let $G$ be a conjugation-free group, generated by $x_1,\ldots,x_n$, with a planar graph $Gr(G)$ such that all its edges are straight segments. We assume that $Gr(G)$ is connected (see Remark \ref{remSevComp} for the case when $Gr(G)$ has several components). Let $v_1,\ldots,v_m$ be the set of vertices of $Gr(G)$, corresponding to the set of multiple relations $R_1,\ldots,R_m$ of $G$. We construct a real line arrangement $\LL(G)$ from $Gr(G)$.

 If $Gr(G)$ is empty, then  we define $\LL(G)$ as a real generic line arrangement (that is, all the singular points are nodes), consisting of $n$ lines.

 Hence assume that $Gr(G)$ is not an empty graph. If $Gr(G)$ consists of only one vertex $v_1$ and no edges, then $v_1$ corresponds in $G$ to a multiple relation of length $t$. In that case, fix a point $p_1 \in \RR^2$ and pass through it $t$ different real lines.

  Assume thus that there is at least one edge in $Gr(G)$. First, we extend the edges of $Gr(G)$ to be infinite straight lines, and call the union of all these lines $\LL$. Let $I$ be the set of intersection points of $\LL$, i.e., at this step, $I = \{p_1,\ldots,p_m\}$, where the $p_i$'s are the points corresponding to the vertices of $Gr(G)$ (for an example, see Figures \ref{GraphToLine}(a) and  \ref{GraphToLine}(b.I)). We now build the rest of the arrangement inductively.

 Looking at the vertex $v_1$, it corresponds in $G$ to a multiple relation $R_1 = R_{1,(i_t,\ldots,i_1)}$, $t \geq 3$, and in $\LL$ to the singular point $p_1$. $Gr(G)$ is connected, so there is an edge $e$ connecting $v_1$ with another vertex. Renumerate the generators if needed, this edge corresponds to a generator $x_{i_1}$, which participates in the relation  $R_1$ and in another multiple relation; this generator corresponds to the  already drawn line $\ell_{i_1} \in \LL$ (this is the extended edge $e = e_{i_1}$). In addition, there might be other generators $x_{i_2},\ldots,x_{i_u}$, for which the corresponding lines were already drawn (see Figure  \ref{GraphToLine}(b.I)).
  Thus, assume that for the generators $x_{i_{u+1}},\ldots,x_{i_t}$, the corresponding lines were not drawn. We draw a line $\ell_{i_{u+1}}$ which corresponds to the generator $x_{i_{u+1}}$, in the following way: the line $\ell_{i_{u+1}}$ passes through the singular point $p_1$ and intersects all the other lines in $\LL$ at nodes, i.e. it does not pass through the singular points in $I - \{p_1\}$. By abuse of notation, let $\LL$ be the new arrangement $\LL \cup \ell_{i_{u+1}}$ and let $I$  be the set of all intersection points of the new arrangement $\LL$. We now repeat the same process for all the other generators $x_{i_{u+2}},\ldots,x_{i_t}$ (that is, the lines $\ell_{i_{u+2}},\ldots,\ell_{i_t}$ pass through $p_1$ and intersect all the lines of $\LL$ at nodes). In this way, we have attached to every generator that participates in $R_1$ a line in $\LL$ (see Figure \ref{GraphToLine}(b.II)).

  We now look at the other vertices which are neighbors of $v_1$. If $v_2$ is such a vertex (corresponding to the singular point $p_2$ in $\LL$), corresponding to the relation $R_2 = R_{2,(j_s,\ldots,j_1)}$ in $G$, we know that  already for some of the generators in $\{x_{j_k}\}_{k=1}^s$, the corresponding lines were drawn. Thus, for the rest of the generators in this set, we draw the corresponding lines, in the same way as above: these  lines pass only through the singular point $p_2$ and intersect all the other lines in $\LL$ at nodes (see Figure \ref{GraphToLine}(III)).

  In the same way, we go over all the vertices of $Gr(G)$ (see Figure \ref{GraphToLine}(b.IV)). Eventually we get that $\LL$ is a real line arrangement in $\RR^2$ with $n'$ lines. In $n' < n$ then there are $n-n'$ generators of $G$: $x_{k_1},\ldots,x_{k_{n-n'}}$ which do not participate in any multiple relation, that is, $[x_i,x_{k_u}]=e$ for every $1 \leq i \leq n$ and $1 \leq u \leq n-n'$.

  We draw a line $\ell_{k_1}$ which corresponds to the generator $x_{k_{1}}$, in the following way: the line $\ell_{k_1}$ does not pass through any singular point of $\LL$, hence it intersects all the lines of $\LL$ at nodes (see Figure \ref{GraphToLine}(b.V)).  By abuse of notation, let $\LL$ be the new arrangement $\LL \cup \ell_{k_{1}}$. We now repeat the same process for all the other generators $x_{k_{2}},\ldots,x_{k_{n-n'}}$. Eventually we get that $\LL$ is a real line arrangement with $n$ lines.

    Complexifying the equations of the lines, we get an affine real line arrangement $\LL(G)$ in $\CC^2$  with $n$ lines. Note that the number of multiple points of $\LL(G)$ is the number of multiple relations.
 }
 \end{definition}

 \begin{example}
\rm{ Let $H$ be a conjugation-free group, generated by $7$ generators $x_1,\ldots,x_7$ with the following relations:
 $$
 R_1:x_3 x_2 x_1 = x_2 x_1 x_3 = x_1 x_3 x_2, \,\,R_2: x_5 x_4 x_3 = x_4 x_3 x_5 = x_3 x_5 x_4,\,\,  R_3:x_6 x_5 x_1 = x_5 x_1 x_6 = x_1 x_6 x_5,
 $$
 and
 $$
 R_4:x_4 x_2 = x_2 x_4, \,\, R_5:x_5 x_2 = x_2 x_5,\,\, R_6:x_6 x_2 = x_2 x_6,\,\, R_7:x_6 x_4 = x_4 x_6,\, R_{7+j}:[x_7,x_i]=e,\,1 \leq j \leq 6.
 $$
 The graph $Gr(H)$ is presented in  Figure \ref{GraphToLine}(a), where next to each vertex we write the associated  multiple relation. The edges of $Gr(H)$ are denoted by $e_1,e_3$ and $e_5$ and the vertices by $v_1,v_2$ and $v_3$ (recall that the index of $e_i$ corresponds to the common generator $x_i$ that participate in the two relations, that correspond to the vertices connected by $e_i$).

 In order to construct the corresponding line arrangement $\LL(H)$ we first extend the edges $e_1,e_3$ and $e_5$ to infinite straight lines $\ell_1,\ell_3$ and $\ell_5$. These lines intersect each other at the points $p_1,p_2$ and $p_3$, which correspond to the vertices $v_1,v_2$ and $v_3$ (see Figure \ref{GraphToLine}(b.I)). Next, look at the vertex $v_1$, which is associated to the relation $R_{1,(3,2,1)}$. Since the lines
 $\ell_1$ and $\ell_3$ were already drawn, we only have to draw the line $\ell_2$, which correspond to the generator $x_2$. We draw it such that it would pass through $p_1$ and intersect the other lines at nodes (see Figure \ref{GraphToLine}(b.II)). Then we do the same for  the vertices $v_2$ and $v_3$: we first draw the line $\ell_4$ (passing though $p_2$) and finally, the line $\ell_6$ that pass through $p_3$ (see Figure \ref{GraphToLine}(b.III) and \ref{GraphToLine}(b.IV) respectively). We get a line arrangement with $6$ lines, corresponding to the generators $x_1,\ldots,x_6$. As for the generator $x_7$, we draw a generic line $\ell_7$ which intersects the lines $\ell_1,\ldots,\ell_6$ at nodes (see Figure \ref{GraphToLine}(b.V)).

  \begin{figure}[h!]
\epsfysize 8.5cm
\epsfbox{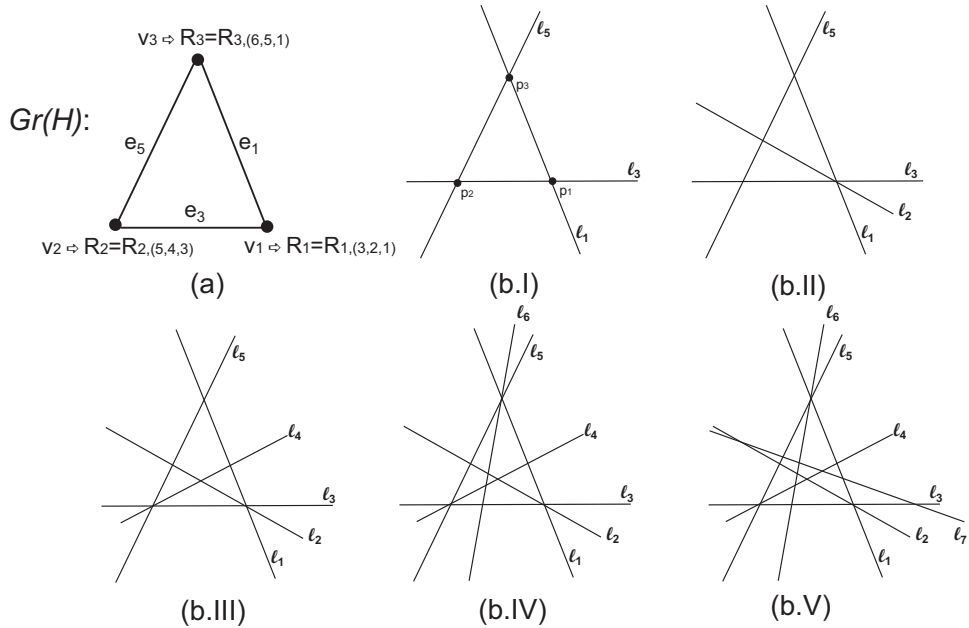}
\caption{An example to a construction of a line arrangement from a conjugation-free group $H$ and its cycle-separated graph $Gr(H)$.}
\label{GraphToLine}
\end{figure}
}
 \end{example}

 \begin{remark} \label{remSevComp}
 \em{
 Let $G$ be a conjugation-free group such that $Gr(G)$ has several components, that is $Gr(G) = \cup_{i=1}^{v}Gr(G)_i$. In order to build $\LL(G)$ we just apply Definition \ref{defLineArrAss} to every connected component $Gr(G)_i$ separately, when the addition of the lines corresponding to generators which do not participate in any multiple relation is done at the end, after we have drawn all the lines that are associated to all the
 components of $Gr(G)$.
 }
 \end{remark}

 \subsection{Fundamental groups of complements of line arrangements} \label{subsecPi1CompLine}

 For a real or complex line arrangement $\mathcal L$, Fan \cite{Fa2} defined a graph $Gr_L(\mathcal{L})$ which is associated to the multiple points of $\LL$. We give here its version for real arrangements (the general version is more delicate to explain and will be omitted): Given a real line arrangement $\mathcal L \subset \CC^2$, the graph $Gr_L(\mathcal{L})$ lies on the real part of $\mathcal L$. Its vertices are  the multiple points of $\mathcal{L}$
 and its edges are  the segments between the multiple points on lines which have at least two multiple points. Note that if the arrangement consists of three
multiple points on the same line, then $Gr_L(\mathcal{L})$ has three vertices on the same edge.
If two such lines happen to intersect in a \emph{simple} point (i.e. a point where exactly two lines are intersected), it is ignored
(i.e. there is no corresponding vertex in the graph). See an example in Figure \ref{graph_GL}.

\begin{figure}[!ht]
\epsfysize 3cm
\centerline{\epsfbox{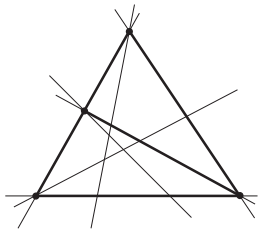}}
\caption{An example for the graph $Gr_L(\mathcal L)$ associated to a line arrangement $\LL$.}\label{graph_GL}
\end{figure}

\medskip

We have now the following obvious proposition:
\begin{prs} \label{prsCFgrToLine}
Let $G$ be a conjugation-free group with a planar graph $Gr(G)$. Let $\LL(G)$ be the associated line arrangement. Then $Gr(G) = Gr_L(\LL(G)) $.
\end{prs}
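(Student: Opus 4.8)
The plan is to prove the equality $Gr(G) = Gr_L(\LL(G))$ by reading both graphs off the same combinatorial datum, namely the multiple relations of $G$ together with their shared generators, and matching vertices to vertices and edges to edges. First I would set up the vertex correspondence. By Definition \ref{defGraphCyclicRelated} the vertices of $Gr(G)$ are exactly the multiple relations $R_1,\ldots,R_m$ of $G$. On the other side, the construction in Definition \ref{defLineArrAss} produces, for each multiple relation $R_p$ of length $t\geq 3$, a single point $p_p$ through which $t$ lines of $\LL(G)$ pass, while every other line introduced during the construction is arranged to meet the rest of the arrangement only at nodes (the lines completing a relation pass through exactly one $p_j$, and the generic lines attached to non-participating generators pass through none). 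Hence the multiple points of $\LL(G)$ are precisely $p_1,\ldots,p_m$, and since these are by definition the vertices of $Gr_L(\LL(G))$, the assignment $v_p\mapsto p_p$ is a bijection of vertex sets.

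Next I would match the edges in the forward direction. In $Gr(G)$ two vertices $v_p,v_{p'}$ are joined by the edge $e_i$ precisely when $R_p$ and $R_{p'}$ share their (by requirement (3) of Definition \ref{defCyclicRelated}, unique) common generator $x_i$. In the construction of $\LL(G)$ this shared generator corresponds to the line $\ell_i$ obtained by extending the straight segment $e_i$; by design $\ell_i$ passes through both multiple points $p_p$ and $p_{p'}$, and the original segment $e_i$ is exactly the sub-segment of $\ell_i$ joining them. As $p_p,p_{p'}$ are multiple points lying on a common line carrying at least two multiple points, this segment is an edge of $Gr_L(\LL(G))$ by the description recalled in Section \ref{subsecPi1CompLine}. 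Thus each edge of $Gr(G)$ is sent to an edge of $Gr_L(\LL(G))$ compatibly with the vertex bijection.

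For the converse inclusion I would use the defining feature of the construction that the only lines of $\LL(G)$ carrying two or more multiple points are the extended edges $\ell_i$, every other line meeting $p_1,\ldots,p_m$ at most once. Consequently an edge of $Gr_L(\LL(G))$, being a segment between two multiple points on a line with at least two multiple points, must lie on some $\ell_i$, and therefore records a generator $x_i$ common to the two corresponding relations; it is the image of the edge $e_i$ of $Gr(G)$. Combining the two inclusions with the vertex bijection gives $Gr(G) = Gr_L(\LL(G))$.

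The main point needing care—and the reason the statement is nonetheless essentially immediate—is the bookkeeping of multiple points lying on a single line: a generator may participate in three or more multiple relations, so the extended line $\ell_i$ can carry three or more collinear multiple points. The hard part will be to check that in this situation the segments produced on $\ell_i$ by $Gr_L$ coincide with the edges labelled $e_i$ assigned by Definition \ref{defGraphCyclicRelated}, invoking the planarity hypothesis on $Gr(G)$ together with the convention for three collinear multiple points recalled in Section \ref{subsecPi1CompLine}. I expect this to be a verification that the incidences agree rather than a substantive obstacle, the construction of $\LL(G)$ having been tailored precisely so that its incidence structure reproduces $Gr(G)$.
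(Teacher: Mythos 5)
Your proof is correct and takes essentially the same route as the paper's (much terser) argument: the construction of $\LL(G)$ places a multiple point exactly at each vertex of $Gr(G)$ and realizes each edge as the segment of the extended line $\ell_i$ between the corresponding multiple points, so the two graphs coincide as drawn objects. Your explicit converse inclusion (every edge of $Gr_L(\LL(G))$ lies on some extended edge-line $\ell_i$) and your flagging of the collinear case, where a generator participates in three or more multiple relations, are refinements of points the paper's two-sentence proof passes over silently.
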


\begin{proof}
Indeed, through every vertex of $Gr(G)$ we draw more than two lines, and thus every vertex corresponds to a multiple point on $\LL(G)$, which in turn corresponds to a vertex (found in the same position) of $Gr_L(\LL(G))$. The edges connecting these vertices in $Gr(G)$ are only extended to infinite straight lines in $\LL(G)$, now connecting between multiple points, and hence considered again as edges in $Gr_L(G)$.
\end{proof}

\begin{remark} \label{remStrLineArr}
\emph{Given  a complex line arrangement $\mathcal L$, it is known that $\pi_1(\CC^2 - \LL)$ is a free abelian group if and only if $\LL$ has only nodes as intersection points (see for example \cite[Example 1.6(a)]{DOZ}). Moreover, Fan \cite{Fa1,Fa2} proved that if the graph $Gr_L(\mathcal L)$ has no cycles, i.e. $\beta(Gr_L(\LL))=0$, then
$\pi_1 (\CC \PP ^2 - \mathcal L)$, and therefore also $\pi_1 (\CC ^2 - \mathcal L)$, are isomorphic to a direct sum of a free abelian group and free groups.
Eliyahu et al. \cite{ELST} proved the inverse direction to Fan's result, i.e. if the fundamental group of the arrangement is a direct sum of a free abelian group and free groups, then the graph $Gr_L(\mathcal L)$ has no cycles.}
\end{remark}

Recall that for computing the fundamental group of a complement of a curve $C$ in $\C^2$, one can use the Zariski-van Kampen thereom \cite{vK}. This theorem uses a generic projection $\pi : \C^2 \to \ell = \C^1$ (or a projection $\pi : \C\PP^2 \to  \C\PP^1$ with a center $O$) to a generic line $\ell$ in order to induce  in the fiber
$\C^1_{p_0} = \pi^{-1}({p_0})$  generators of the group $\pi_1(\C^1_{p_0} - (\C^1_{p_0} \cap C),{p_0})$, where ${p_0}$ is a generic point on $\ell$. These generators also generate $\pi_1 (\CC^2 - C,{p_0})$.

Using these notations,
we recall the notion of a {\it conjugation-free geometric presentation} for the fundamental group of the complement of line arrangements (see \cite{EGT1,FG}):
\begin{definition}\label{CFGP-LineArr}
Let $G = \pi_1(\CC^2 - \LL)$ be the fundamental group of the affine complement of a real line  arrangement with $n$ lines $\{\ell_1,\ldots,\ell_n\}$. We say that $G$ has {\em a conjugation-free geometric presentation} if $G$ has a presentation with the following properties:
\begin{enumerate}
\item The generators $\{ x_1,\dots, x_{n} \}$ are the meridians of the lines at $\C^1_{p_0}$.
\item The induced relations are of the following type:
$$x_{i_t} x_{i_{t-1}} \cdot \ldots \cdot x_{i_1} = x_{i_{t-1}} \cdot \ldots \cdot x_{i_1} x_{i_t} = \cdots = x_{i_1} x_{i_t} \cdot \ldots \cdot x_{i_2}$$
where $\{ i_1,i_2, \dots , i_t \} \subseteq \{1, \dots, n \}$ is an increasing subsequence of indices.
This relation is the only relation
induced by an intersection point of multiplicity $t$, i.e. the intersection of the lines $\{ \ell_{i_1}, \ell_{i_2}, \ldots , \ell_{i_t} \}$. Note that if $t=2$  we get the usual commutator and also that there are no other relations in $G$.
\end{enumerate}
Note that we claim that with respect to  \emph{particular choices} of the  line $\ell$ (the line to which we project the arrangement), the point $p_0$ (being the basepoint for  the meridians in the fiber $\CC^1_{p_0}$ and also for $\pi_1(\CC^2-\LL)$)
and the projection point $O$, the conjugation-free property holds.
\end{definition}

\begin{definition}
\emph{A line arrangement $\LL$ is called \emph{conjugation-free} if $\pi_1(\C^2 - \LL)$ has  a conjugation-free geometric presentation.
}\end{definition}

\begin{prs} \label{prsLineArrToCFG}
If a line arrangement $\LL$ is conjugation-free then $\pi_1(\C^2 - \LL)$ is a conjugation-free group.
\end{prs}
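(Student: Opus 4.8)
The plan is to take the presentation handed to us by the hypothesis and verify, directly, that it is a cyclic-related presentation all of whose relations are conjugation-free; by Definitions \ref{defCyclicRelated} and \ref{defConjFree} this is exactly what it means for $\pi_1(\CC^2 - \LL)$ to be a conjugation-free group. Since $\LL$ is conjugation-free, $G = \pi_1(\CC^2 - \LL)$ admits a conjugation-free geometric presentation (Definition \ref{CFGP-LineArr}): the generators $x_1,\ldots,x_n$ are the meridians of the lines $\ell_1,\ldots,\ell_n$, and the relations are in bijection with the intersection points of $\LL$, where a point of multiplicity $t$ lying on $\ell_{i_1},\ldots,\ell_{i_t}$ (with $i_1 < \cdots < i_t$) contributes precisely the cyclic relation of length $t$ displayed in Definition \ref{CFGP-LineArr}(2), nodes ($t=2$) contributing commutators, and no further relations occurring.

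First I would record that requirement (1) of Definition \ref{defCyclicRelated}, together with the conjugation-free hypothesis of Definition \ref{defConjFree}, holds for free: each relation is, verbatim, a cyclic relation of length $t$ on an increasing set of indices in which no generator is conjugated, i.e. all the conjugating elements $s_{p,j}$ equal $e$. Hence the only thing left to check is that this same presentation satisfies requirements (2) and (3) of Definition \ref{defCyclicRelated}, and it is here that the geometry of lines enters.

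Both remaining requirements reduce to the elementary incidence fact that two distinct lines of $\CC^2$ meet in at most one point. For requirement (3), suppose two relations $R_p, R_{p'}$ coming from distinct intersection points $q \neq q'$ had two generators $x_a, x_b$ in common; then $\ell_a$ and $\ell_b$ would both pass through $q$ and through $q'$, so these two distinct lines would meet in two points, a contradiction, whence $|\{i_1,\ldots,i_t\} \cap \{j_1,\ldots,j_s\}| \leq 1$. For requirement (2), given a pair $j_1 \neq j_2$ the lines $\ell_{j_1}, \ell_{j_2}$ meet in a single point $q$ of some multiplicity $t \geq 2$; the relation attached to $q$ then contains both $x_{j_1}$ and $x_{j_2}$, and it is the only such relation, again because any relation containing both would arise from a point lying on both lines, which must be $q$.

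The step I expect to need the most care is the existence half of requirement (2): it presupposes that every pair of lines of $\LL$ actually meets in $\CC^2$, i.e. that $\LL$ has no two parallel lines, since for a genuinely parallel pair the two meridians generate a free subgroup and no relation between them appears in the affine presentation. I would therefore isolate this incidence input explicitly and note that it holds for the arrangements of interest by construction: in Definition \ref{defLineArrAss} every newly drawn line is required to cross all previously drawn lines, so that every pair of lines of $\LL(G)$ meets. Granting that every two lines meet, all three requirements of Definition \ref{defCyclicRelated} are satisfied by a conjugation-free presentation, and therefore $G = \pi_1(\CC^2 - \LL)$ is a conjugation-free group.
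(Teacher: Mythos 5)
Your proof is correct and follows essentially the same route as the paper's: requirement (1) of Definition \ref{defCyclicRelated} comes for free from Definition \ref{CFGP-LineArr}, and requirements (2) and (3) both reduce to the incidence fact that two distinct lines meet in at most one point. Your additional observation about parallel lines is a genuine refinement rather than a different approach --- the paper's proof tacitly asserts that any two lines of $\LL$ meet, which is precisely the existence half of requirement (2) that you isolate, and discharging it via the construction in Definition \ref{defLineArrAss} (where every newly drawn line crosses all previously drawn ones) is the right way to close that small gap.
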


\begin{proof}
In order to check that $\pi_1(\C^2 - \LL)$ is a conjugation-free group, we check the three requirements listed in Definition \ref{defCyclicRelated}. Requirement (1) (that every relation is a conjugation-free relation, i.e. a cyclic-relation without conjugations) is fulfilled due to Property (2)  of Definition \ref{CFGP-LineArr}. If $\ell_{j_1}$, $\ell_{j_2}$ are two different lines in $\LL$, then they meet only at one point $p$, whose multiplicity is $m(p)$. Therefore, the associated generators $x_{j_1}$, $x_{j_2}$ to these lines participate in the cyclic relation $R$ of length $m(p)$, induced from the point $p$, which is the only relation in which both of these generators participate. Thus requirement (2) of Definition \ref{defCyclicRelated} is fulfilled. As for requirement (3), assume that there are two relations $R_{p,(i_t,\ldots,i_1)}$, $R_{p',(j_s,\ldots,j_1)}$ of $\pi_1(\CC^2 - \LL)$, induced from the singular points $p,p'$,
 such that: $$|\{ i_1,i_2, \dots , i_t \} \cap \{ j_1,j_2, \dots , j_s \}|  \geq 2 .$$ Let $\{i',i''\} \subseteq \{ i_1,i_2, \dots , i_t \}$ be two of the indices contained in this intersection. As every relation is induced from an intersection point, this means that the lines $\ell_{i'}$ and $\ell_{i''}$ intersect at two different points, which is impossible.
\end{proof}

\begin{remark}\label{RemRelCyclic}
\rm{
(1) Given any line arrangement $\LL$,  The Zariski-van Kampen thereom shows that an intersection point $p$ of multiplicity $t$ induces the following relation in $\pi_1(\C^2 - \LL)$:
$$x_{i_t}^{s_{p,t}} x_{i_{t-1}}^{s_{p,t-1}} \cdot \ldots \cdot x_{i_1}^{s_{p,1}} = x_{i_{t-1}}^{s_{p,t-1}} \cdot \ldots \cdot x_{i_1}^{s_{p,1}} x_{i_t}^{s_{p,t}} = \cdots = x_{i_1}^{s_{p,1}} x_{i_t}^{s_{p,t}} \cdot \ldots \cdot x_{i_2}^{s_{p,2}},$$
where the $x_{i_j},\,1 \leq j \leq t$ are the meridians of the lines $l_{i_j}$ and $s_{p,i} \in \langle x_1,\ldots,x_n \rangle$. Therefore, for every line arrangement $\LL$, the group $\pi_1(\CC^2-\LL)$ is a cyclic-related group
(the proof of that is along the same lines as the proof of Proposition \ref{prsLineArrToCFG}).

(2) Note  that if for every singular point $p$ in the line arrangement $\LL$, $s_{p,i}=e$ (in the induced relation) for every $i$, then $\LL$ is conjugation-free (and thus $\pi_1(\C^2 - \LL)$  is a conjugation-free group).

(3) The Zariski-Lefschetz hyperplane section theorem (see \cite{milnor})
states that
$\pi_1 (\PP^N - S) \simeq \pi_1 (H - (H \cap S)),$
where $S$ is a hypersurface and $H$ is a generic 2-plane.
When $S$ is a hyperplane arrangement, $H \cap S$ is a line arrangement in $\PP^2$. Thus, one can investigate the topology of hyperplane arrangements via the fundamental groups $\pi_1(\PP^2- \LL)$ and $\pi_1(\CC^2 - \LL)$, where $\LL$ is an arrangement of lines.
}
\end{remark}

We can now state the opposite statement to Proposition \ref{prsCFgrToLine}:
\begin{prs} \label{prsIsoGraphs}
Let $\LL$ be a conjugation-free real line arrangement, $G = \pi_1(\C^2 - \LL)$. Then $Gr_L(\LL) \simeq Gr(G)$.
\end{prs}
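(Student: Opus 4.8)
The plan is to read the isomorphism straight off the dictionary supplied by the Zariski--van Kampen theorem, essentially running the already-established Proposition~\ref{prsCFgrToLine} backwards. Because $\LL$ is conjugation-free, Proposition~\ref{prsLineArrToCFG} guarantees that $G=\pi_1(\CC^2-\LL)$ is a conjugation-free, hence cyclic-related, group, so the graph $Gr(G)$ of Definition~\ref{defGraphCyclicRelated} is defined and can be compared with Fan's graph $Gr_L(\LL)$. Both graphs live on the real part of $\LL$, so I would construct the isomorphism as the identity on positions in the plane and merely verify that it respects vertices and edges.

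First I would pin down the vertices. By Remark~\ref{RemRelCyclic}(1) each intersection point $p$ of multiplicity $t$ induces exactly one cyclic relation $R_p$ of length $t$, and because $\LL$ is conjugation-free this relation is conjugation-free. A point is a \emph{multiple} point exactly when $t\geq 3$, which is precisely the condition that $R_p$ be a multiple relation in the sense of Definition~\ref{defMultRel}; conversely, since every relation of $G$ is induced by an intersection point, every multiple relation arises this way. Thus $p\mapsto v_p$ is a bijection between the multiple points of $\LL$ (the vertices of $Gr_L(\LL)$) and the multiple relations of $G$ (the vertices of $Gr(G)$). Simple (double) points are discarded consistently on both sides: Fan's graph ignores them and they yield only length-$2$ relations, which contribute no vertex of $Gr(G)$.

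Next I would match the edges. A line $\ell$ carrying two multiple points $p,p'$ corresponds to a meridian $x_\ell$ participating in both $R_p$ and $R_{p'}$, and the segment $[p,p']$ is an edge of $Gr_L(\LL)$. Since $G$ is cyclic-related, requirement~(3) of Definition~\ref{defCyclicRelated} forces $\{i_1,\ldots,i_t\}\cap\{j_1,\ldots,j_s\}$ to contain at most one index; as it already contains the index of $x_\ell$ it equals $\{\ell\}$, which is exactly the condition of Definition~\ref{defGraphCyclicRelated} for an edge $e_\ell$ joining $v_p$ and $v_{p'}$. Reading this backwards, an edge of $Gr(G)$ records a single shared generator $x_\ell$ whose line passes through both multiple points, hence supplies the connecting segment in $Gr_L(\LL)$. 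So adjacency is matched line-by-line, and the vertex bijection becomes a graph isomorphism.

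The point I expect to demand the most care is a line $\ell$ carrying three or more multiple points --- equivalently a generator $x_\ell$ shared among three or more relations --- which is the ``three vertices on the same edge'' situation flagged in the definition of $Gr_L(\LL)$. Here I would use requirement~(2) of Definition~\ref{defCyclicRelated}, which makes the incidence pattern rigid: the only index common to any two of these relations is $\ell$ itself, so both graphs should record these collinear points as lying along the single labelled edge/line $e_\ell$ rather than as independent pairwise connections. The hard part is thus not the two easy bijections above but checking that Fan's collinear convention and the generator-labelling of Definition~\ref{defGraphCyclicRelated} assign \emph{the same} incidences in this degenerate case, so that no spurious cycle is created or destroyed and $\beta$ is preserved; once this is verified, $Gr_L(\LL)\simeq Gr(G)$ follows.
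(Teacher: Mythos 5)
Your proposal takes essentially the same route as the paper's own proof, which is exactly the two-step identification you give: multiple points correspond to multiple conjugation-free relations (vertices), and lines through two multiple points correspond to generators shared by the two relations (edges). The degenerate case you flag --- a line carrying three or more multiple points, where Fan's collinear convention must be reconciled with the pairwise edge rule of Definition~\ref{defGraphCyclicRelated} --- is glossed over in the paper's proof as well, so your version is, if anything, the more careful of the two.
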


\begin{proof}
Indeed, the vertices of $Gr_L(\LL)$ correspond to multiple points, which induce multiple conjugation-free relations in $G$, and thus correspond to vertices in $Gr(G)$. The edges of $Gr_L(\LL)$ correspond to lines connecting two multiple points, which correspond in $G$ to generators
participating in the relations (which correspond to these points), and hence these generators correspond to edges in $Gr(G)$ connecting the
 corresponding vertices there.
\end{proof}

Note that the importance of the family of line arrangements whose fundamental group has a conjugation-free geometric presentation is that such a presentation of the fundamental group can be read directly from the arrangement (in particular, from its intersection lattice) without any additional computations, and thus depends on the combinatorics of the arrangement.

\medskip

Denote by $\beta (\mathcal L)$  the first Betti number of the graph $Gr_L(\mathcal L)$.
We recall the following proposition from \cite{EGT2} (whose complete proof is given at \cite{FG}):

\begin{prs} \label{prsCFless1}
 Let $\LL$ be a real line arrangement with $\beta(\LL) \leq 1$. Then $\LL$ is  conjugation-free.
\end{prs}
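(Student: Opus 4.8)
The plan is to compute $\pi_1(\CC^2-\LL)$ by the Zariski--van Kampen theorem applied to a generic projection $\pi : \CC^2 \to \ell$, and to show that, for a suitable choice of the projection center $O$, the line $\ell$ and the basepoint $p_0$, every relation induced by a singular point of $\LL$ can be written without conjugations. Recall that a singular point $p$ of multiplicity $t$, lying on the lines $\ell_{i_1},\ldots,\ell_{i_t}$, contributes a relation which is a full twist of the $t$ corresponding strands; concretely it is a cyclic relation in the meridians visible in a fiber just to one side of $p$, and these meridians are conjugates of the base generators $x_1,\ldots,x_n$, the conjugating word being determined by the braiding of the strands along the path from $p_0$ to $p$. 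Hence $\LL$ is conjugation-free precisely when these conjugating words can be made trivial simultaneously at every singular point.

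First I would dispose of the case $\beta(\LL)=0$. Here $Gr_L(\LL)$ is a forest, and by Fan's result (recalled in Remark \ref{remStrLineArr}) the group $\pi_1(\CC^2-\LL)$ is a direct sum of a free abelian group and free groups; the corresponding presentation, obtained by sweeping the fiber across the mutually independent multiple points, is manifestly conjugation-free, since no line carries two multiple points linked by a returning path, so each cluster of strands entering a multiple point can be brought into adjacent, correctly ordered position with trivial conjugating word.

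The substance of the proposition is the case $\beta(\LL)=1$, in which $Gr_L(\LL)$ consists of a single cycle $C$ together with tree-like appendages. The appendages contribute conjugation-free relations exactly as above and do not interact with the remainder. The heart of the matter is the cycle itself: a loop of lines and multiple points around which the braid monodromy could a priori force a nontrivial conjugating word. The key step is to use the freedom in choosing $O$, $\ell$ and $p_0$ so as to \emph{open} the cycle, i.e.\ to arrange that one distinguished multiple point of $C$ is swept last; processing the multiple points of $C$ in the resulting left-to-right order, the conjugating words accumulated while traversing the loop are shown to cancel, leaving each relation conjugation-free. This explicit braid-monodromy computation is the technical core, carried out in \cite{EGT2,FG}.

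The main obstacle is exactly this cycle analysis. For a single cycle the braiding picked up while going around the loop can be undone by one good choice of projection; but two independent cycles ($\beta(\LL)\geq 2$) entangle in a way that no single choice of $O$, $\ell$ and $p_0$ can untangle simultaneously, which is why the hypothesis $\beta(\LL)\leq 1$ is essential for this method.
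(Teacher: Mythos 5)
The first thing to note is that the paper contains no proof of this proposition at all: it is explicitly recalled from \cite{EGT2}, with the complete proof given in \cite{FG}. Your proposal, in the end, does the same -- your own words are that the cycle analysis is ``the technical core, carried out in \cite{EGT2,FG}'' -- so as a self-contained argument it has a genuine gap exactly where the mathematical content lies: you never show that the conjugating words accumulated around the cycle cancel, and even the base case $\beta(\LL)=0$ is not established. For that case you appeal to Fan's theorem (Remark \ref{remStrLineArr}), but Fan's result only identifies the isomorphism type of $\pi_1(\CC^2-\LL)$, whereas conjugation-freeness (Definition \ref{CFGP-LineArr}) is a property of a specific \emph{geometric} presentation whose generators are meridians in a fiber; a group can be a direct sum of free groups while a given geometric presentation still carries nontrivial conjugations, so ``manifestly conjugation-free'' is an assertion, not a proof. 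Moreover, the actual proofs in \cite{EGT1,EGT2,FG} do not proceed by one global choice of $O$, $\ell$ and $p_0$ that ``opens'' the cycle; they argue by induction on the number of lines, showing via braid-monodromy computations that the conjugation-free property is preserved when a line is added under suitable conditions on the graph.

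Your closing paragraph is also factually wrong in a way the paper itself contradicts. You claim that two independent cycles ($\beta(\LL)\geq 2$) ``entangle in a way that no single choice of $O$, $\ell$ and $p_0$ can untangle,'' but the class of conjugation-free arrangements is strictly larger than those with $\beta(\LL)\leq 1$: by Definition \ref{defCFG} and Theorem \ref{thmCFG} (quoted from \cite{FG}), any arrangement whose graph is a disjoint union of conjugation-free graphs is conjugation-free, and such graphs may have several cycles -- for instance the cycle-separated graph with two cycles in Figure \ref{cycleSep}(a). So $\beta(\LL)\leq 1$ is a convenient sufficient hypothesis, not an intrinsic limit of the braid-monodromy method, and the heuristic you offer for why the hypothesis is ``essential'' does not reflect what actually happens in the cited proofs.
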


The class of conjugation-free real line arrangement is in fact bigger than the class of line arrangements with $\beta(\LL) \leq 1$. We cite the following definition from \cite[Definition 6.4]{FG}:

\begin{definition} \label{defCFG}  \emph{
Let $G$ be a planar connected graph and denote by deg$(v)$ the number of edges exiting from a vertex  $v \in G$. The graph $G$ is called a \emph{conjugation-free graph} if:
\begin{enumerate}
\item $\beta(G) \leq 1$, or
\item Let $\{v_i\}_{i=1}^m$ be the set of vertices in $G$ satisfying deg$(v_i) \leq 2$. For each $v_i$, $1 \leq i \leq m$, denote by $V_i$ the subset of $G$, composed of the vertex $v_i$ and the edge(s) exiting from it. Let $X = X(G) \doteq \bigcup_{i=1}^m V_i$. Then $G$ is a conjugation-free graph if $G - X$ is.
\end{enumerate}
}
\end{definition}

\noindent
For example, the graph in Figure \ref{cycleSep}(a) is a conjugation-free graph with two cycles. In \cite{FG} the following result was proved:

\begin{thm} \label{thmCFG} \cite[Theorem 6.5]{FG}
Let $\LL$ be a real line arrangement. If $Gr_L(\LL)$ is a disjoint union of conjugation-free graphs, then $\LL$ is a conjugation-free line arrangement.
\end{thm}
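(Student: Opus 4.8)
The plan is to argue by induction on the recursive structure built into Definition \ref{defCFG}, using Proposition \ref{prsCFless1} as the base case and the Zariski--van Kampen theorem as the computational engine. First I would reduce to the connected case. If $Gr_L(\LL)$ is a disjoint union of conjugation-free graphs, then two lines lying in different components can only meet at a simple point: were they to meet at a multiple point, that point would be a vertex joining the two components. Hence every relation linking meridians from distinct components is a commutator, which is already conjugation-free, and it suffices to produce the desired presentation componentwise, in the spirit of Example \ref{exmGraphCycRel}(3). The lines passing through no multiple point at all contribute only node relations and can be added generically at the very end.

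For a single connected conjugation-free graph $G = Gr_L(\LL)$ I would induct on the number of peeling steps in Definition \ref{defCFG}. If $\beta(G) \le 1$ we are done by Proposition \ref{prsCFless1}. Otherwise, let $X = X(G) = \bigcup_i V_i$ be the union of the degree-$\le 2$ vertices together with their incident edges, and suppose by induction that the sub-arrangement whose Fan graph is $G - X$ is conjugation-free. The vertices collected in $X$ are peripheral in the precise sense that they are either isolated multiple points, endpoints of a chain, or pass-through points; in each case the lines responsible for such a vertex meet the core lines (those accounting for $G - X$) only at simple points, apart from the at most two edges that attach the vertex to the core.

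The heart of the argument is a choice of generic projection $\pi \colon \CC^2 \to \CC^1$, of basepoint $p_0$, and of an ordering of the singular fibres, so that the braid monodromy realizes the core relations first and the peripheral relations afterwards. In the language of the Zariski--van Kampen theorem, each multiple point $p$ of multiplicity $t$ produces the cyclic relation among the meridians of the lines through $p$ conjugated by a word $s_p$ recording the path in the fibre from $p_0$ to $p$ (see Remark \ref{RemRelCyclic}(1)); the presentation is conjugation-free precisely when every $s_p$ can be taken trivial. The inductive hypothesis supplies such an ordering, with trivial conjugating words, for the core. I would then show that the peripheral lines can be inserted into the fibre ordering so that (a) when the sweep reaches a degree-$\le 2$ vertex its incident meridians are still in standard position, forcing $s_p = e$ there, and (b) inserting them does not disturb the already-achieved triviality of the core conjugators, using that the only non-node interaction between periphery and core occurs along the one or two attaching edges.

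The main obstacle will be step (b): controlling the Lefschetz pairs and the skeleton paths so that inserting the peripheral lines neither re-conjugates the core meridians nor creates a cycle forcing a nontrivial $s_p$. This is precisely where the hypothesis $\deg(v_i) \le 2$ is used: a degree-$\le 2$ attachment admits a planar positioning in which its lines can be swept in last without crossing the braiding that produces the core entanglement, so the successive braid-monodromy conjugations telescope to the identity. Verifying this telescoping in full --- and checking that the recursion in Definition \ref{defCFG} terminates at a $\beta \le 1$ configuration to which Proposition \ref{prsCFless1} applies --- is the delicate bookkeeping carried out in \cite{FG}.
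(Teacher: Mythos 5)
You should know at the outset that the paper contains no proof of Theorem \ref{thmCFG}: it is quoted verbatim from \cite[Theorem 6.5]{FG}, so the only proof in play is the external one. Your outline does track the strategy of that cited proof --- induction on the peeling recursion of Definition \ref{defCFG}, with Proposition \ref{prsCFless1} (the case $\beta \leq 1$) as base case, and a build-up argument in which the lines responsible for degree-$\leq 2$ vertices are added back to an already conjugation-free core. So the skeleton is the right one.

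As a proof, however, your text has two genuine gaps. The decisive one is your step (b): the assertion that the peripheral lines ``can be swept in last'' so that ``the successive braid-monodromy conjugations telescope to the identity'' \emph{is} the content of the theorem, and you do not argue it --- you explicitly defer it to \cite{FG}, i.e., to the very statement being proved, which makes the argument circular. What is needed at that point is an honest lemma (this is what \cite{FG}, building on \cite{EGT2}, supplies) of the shape: if $\LL$ is conjugation-free and $\ell$ is a line meeting $\LL$ through at most one, respectively two, multiple points (the arrangement-side meaning of $\deg(v_i) \leq 2$), then $\LL \cup \ell$ is again conjugation-free; and this is proved by explicitly computing the braid monodromy of the new singular fibers and simplifying the induced conjugating words, not by a positional genericity appeal. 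Without such a lemma nothing in your sketch forces $s_p = e$. The second gap is in your reduction to the connected case: you claim that relations linking meridians from distinct components ``are commutators, which is already conjugation-free.'' But Definition \ref{CFGP-LineArr} requires the node relations to appear literally as $[x_i, x_j] = e$, whereas the Zariski--van Kampen theorem generically produces $[x_i, x_j^{s}] = e$ with $s$ a nontrivial word (see Remark \ref{RemRelCyclic}(1)); the triviality of the conjugating words at nodes --- including nodes between components, and at the ``generic lines added at the very end'' --- is part of what must be verified, not a freebie. So the componentwise reduction also needs the same kind of line-addition lemma, rather than the direct-sum heuristic of Example \ref{exmGraphCycRel}(3), which presupposes conjugation-freeness rather than establishing it.
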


\begin{remark}
\emph{By looking at the general form of a cycle-separated graph (see Remark \ref{remGenFormCycSep}), it is easy to see that a cycle-separated graph is a conjugation-free graph: since if $G$ is a cycle-separated graph, $X = X(G)$ as in Definition \ref{defCFG}, then $\beta(G-X) \leq 1$.
}\end{remark}

 We therefore have the following connection between a conjugation-free group $G$ and its associated line arrangement.

\begin{prs} \label{prsCFGgroupToLineGroup}
Let $G$ be a conjugation-free group with a planar conjugation-free connected graph $Gr(G)$. Let $\LL(G)$ be the associated line arrangement. Then $\pi_1(\CC^2 - \LL(G)) \simeq G$.
\end{prs}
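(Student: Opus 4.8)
The plan is to identify the conjugation-free geometric presentation of $\pi_1(\CC^2 - \LL(G))$ with the given presentation of $G$, generator by generator and relation by relation. First I would establish that $\LL(G)$ is a conjugation-free line arrangement: by Proposition \ref{prsCFgrToLine} we have $Gr(G) = Gr_L(\LL(G))$, so the hypothesis that $Gr(G)$ is a planar connected conjugation-free graph transfers directly to $Gr_L(\LL(G))$. Since $Gr_L(\LL(G))$ is then a conjugation-free graph (a single such graph, hence a trivial disjoint union of one), Theorem \ref{thmCFG} applies and shows that $\LL(G)$ is a conjugation-free line arrangement. Consequently $\pi_1(\CC^2 - \LL(G))$ admits a conjugation-free geometric presentation in the sense of Definition \ref{CFGP-LineArr}.

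Next I would match the generating sets. By the construction in Definition \ref{defLineArrAss}, each generator $x_i$ of $G$ is assigned a distinct line $\ell_i$ of $\LL(G)$, and conversely every line of $\LL(G)$ arises from exactly one generator; the conjugation-free geometric presentation takes as its generators precisely the meridians $x_1,\ldots,x_n$ of these lines. This produces a canonical bijection between the two generating sets, which I will use to compare relations.

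The core of the argument is to show that, under this bijection, the defining relations of the two presentations coincide. In the conjugation-free geometric presentation the only relations are those induced by the intersection points of $\LL(G)$: a point of multiplicity $t$ on lines $\ell_{i_1},\ldots,\ell_{i_t}$ with $i_1 < \cdots < i_t$ yields the conjugation-free cyclic relation of length $t$ on $x_{i_1},\ldots,x_{i_t}$, and each node yields a commutator. On the side of $G$, requirement (2) of Definition \ref{defCyclicRelated} guarantees that every pair of generators lies in a unique relation, which is either a multiple relation or a length-$2$ commutator. I would then verify both directions: each multiple relation $R_p$ of $G$ corresponds, via Proposition \ref{prsCFgrToLine} and the construction, to the multiple point lying on exactly the lines of the generators participating in $R_p$, and since both presentations use the increasing-index convention the induced cyclic relation is literally $R_p$; conversely, every pair of generators not sharing a multiple relation commutes in $G$, while the corresponding pair of lines meets at a node in $\LL(G)$ — forced by the construction, which draws each new line through its single prescribed multiple point and transverse to all others — producing the same commutator. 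Hence the two relation sets agree and the bijection extends to the desired isomorphism $\pi_1(\CC^2 - \LL(G)) \simeq G$.

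I expect the main obstacle to be the verification that the relation induced at each multiple point is exactly the corresponding cyclic relation of $G$, rather than merely some relation on the same set of generators: this requires that the cyclic ordering dictated by the van Kampen computation agrees with the increasing-index ordering used in both presentations, and that the construction introduces no spurious incidences. The latter is controlled by the inductive nature of Definition \ref{defLineArrAss}, where each newly added line passes through a single prescribed multiple point and meets every other line at a node; combined with the uniqueness in requirement (2) of Definition \ref{defCyclicRelated}, this rules out any mismatch. The remaining bookkeeping — in particular the generic lines attached to generators participating in no multiple relation, which commute with everything on both sides — is routine.
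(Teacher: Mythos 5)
Your opening step---establishing $Gr(G) = Gr_L(\LL(G))$ via Proposition \ref{prsCFgrToLine} and then invoking Theorem \ref{thmCFG} to obtain a conjugation-free geometric presentation of $\pi_1(\CC^2 - \LL(G))$---is exactly how the paper begins. The genuine gap is in your matching of relations. You take the bijection of generators to be the ``identity'' on labels (the meridian of $\ell_i$ corresponds to $x_i$) and claim that ``since both presentations use the increasing-index convention the induced cyclic relation is literally $R_p$.'' But the increasing-index form guaranteed by Definition \ref{CFGP-LineArr} holds with respect to the \emph{geometric} numbering of the meridians (the one attached to the particular choices of projection, fiber and base point for which Theorem \ref{thmCFG} works), and there is no reason this numbering is compatible with the construction labels inherited from the presentation of $G$: Definition \ref{defLineArrAss} draws the lines in an order dictated by traversing the graph, with no control of how they are ordered in the fiber. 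Under a relabeling, the relation induced at a multiple point is the same cyclic word, but expressed in the new labels its cyclic order need not be the decreasing-index one---and cyclic order genuinely matters. On the same generating set $\{x_1,x_2,x_3\}$, the relation consisting of the rotations of $x_3x_2x_1$ and the relation consisting of the rotations of $x_3x_1x_2$ have different normal closures (in the first quotient $x_3$ commutes with $x_2x_1$ but not with $x_1x_2$), so ``a cyclic relation on the correct set of generators'' does not suffice. You flag precisely this obstacle yourself, but your proposed resolution---the inductive construction plus requirement (2) of Definition \ref{defCyclicRelated}---only rules out spurious incidences, i.e., it guarantees that the relation \emph{sets} match; it says nothing about the cyclic order in which the lines through a multiple point are encountered in the van Kampen computation relative to the index order of $G$'s presentation.

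This is where the paper's proof does its real work and diverges from yours: rather than using the label-matching bijection, it defines the isomorphism $f_L$ from the graph isomorphism $f_G$, first on the generators corresponding to edges, and then extends it around each vertex by a cyclic shift (Equation (\ref{eqnDefIso})); the image of a multiple relation is then $R_{h,G,(j_{\s^r(t)},\ldots,j_{\s^r(1)})}$, a cyclic \emph{rotation} of the target relation, which equals it by Remark \ref{remCycRelPres}. In other words, the discrepancy between the two numbering conventions can in general only be absorbed up to rotation at each vertex, and the rotation-invariance of cyclic relations is exactly the lemma needed to close the argument; your proposal, by insisting on a literal rather than merely cyclic match, asserts more than the construction delivers and omits this lemma. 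To repair your route you would either have to prove that the slopes in Definition \ref{defLineArrAss} can be chosen so that the fiber order of the lines agrees with the increasing-label order simultaneously at every multiple point (a global constraint that is nowhere established, and which the lines shared between two multiple points make delicate), or adopt the paper's cyclic-shift construction together with Remark \ref{remCycRelPres}.
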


\begin{proof} Assume that all the edges of $Gr(G)$ are straight segments.
We know that $Gr(G) \simeq Gr_L(\LL(G))$ and thus $Gr_L(\LL(G))$ is a conjugation-free graph. Hence, $\pi_1(\CC^2 - \LL(G))$ is a conjugation-free group (by Theorem \ref{thmCFG}) such that $Gr(G) \simeq Gr(\pi_1(\CC^2 - \LL(G)))$ (by Proposition \ref{prsIsoGraphs}), where the multiplicities of the isomorphic vertices (i.e. the length of the cyclic relations) are the same.

Let $x_1,\ldots,x_n$ be the generators of $\pi_1(\CC^2 - \LL(G))$, $y_1,\ldots,y_n$  the generators of $G$. We claim that the isomorphism $f_L:\pi_1(\CC^2 - \LL(G)) \rightarrow G$ is determined by the given isomorphism on the graphs $f_G: Gr_L(\LL(G))\rightarrow Gr(G)$. Explicitly, let $Gr_L(\LL(G)) = (V_\LL, E_\LL)$ and $Gr(G) = (V_G, E_G)$, where to each vertex and edge in both graphs, the data regarding the corresponding relations and generators is also included.

If $Gr_L(\LL(G)) \simeq Gr(G)$ is empty, then $G$ is a free abelian group (see Example \ref{exmGraphCycRel}(1)), isomorphic to $\ZZ^n$, and by Definition \ref{defLineArrAss}, $\LL(G)$ is a generic line arrangement; by Remark \ref{remStrLineArr}, $\pi_1(\CC^2-\LL(G))$ is also a free abelian group of the same rank, i.e. $G \simeq \pi_1(\CC^2-\LL(G))$.

If $Gr_L(\LL(G))$ consists of one vertex $v_{1,\LL} = v_{1,\LL,(i_t,\ldots,i_1)}$, let $f_G(v_{1,\LL}) = v_{1,G} = v_{1,G,(j_t,\ldots,j_1)}$ be the isomorphic vertex. In this case we define $f_L(x_{i_k}) = y_{j_k}$ for $1 \leq k \leq t$.

Assume thus that $Gr(G)$ has at least one edge.
For every edge $e_{i,\LL} \in E_\LL$, let $f_G(e_{i,\LL}) = e_{j,G} \in E_G$ be the isomorphic edge. Thus we define $f_L(x_i) \doteq y_j$.
Consider now a vertex $v_{g,\LL} = v_{g,\LL,(i_t,\ldots,i_1)} \in V_\LL$, and let $f_G(v_{g,\LL}) = v_{h,G} = v_{h,G,(j_t,\ldots,j_1)} \in V_G$ be the isomorphic vertex.

Every vertex corresponds to a multiple relation, and is connected by several edges to other vertices, such that these edges correspond to generators participating in this multiple relation. Let $\{t_m,\ldots,t_1\} \subseteq \{1,\ldots,t\}$ (resp. $\{s_m,\ldots,s_1\} \subseteq \{1,\ldots,t\}$) be a decreasing sequence of indices such that $x_{i_{t_k}}$ corresponds to an edge $e_{i_{t_k},\LL}$ for every $1 \leq k \leq m$ (resp. $y_{j_{s_k}}$ corresponds to an edge $e_{j_{s_k},G}$). Note that by the above construction, $f_L(x_{i_{t_k}}) = y_{j_{s_k}}$ for $1 \leq k \leq m$. Let $t' = t_m$, $t'' = s_m$; therefore $f_L(x_{i_{t'}}) = y_{j_{t''}}$.

%


 Hence, we define
\begin{equation}\label{eqnDefIso}
f_L(x_{i_{t'-k}}) \doteq y_{j_{(t''-k)\text {mod}(t)}} \text{ for } 0 \leq k < t'\text{ and } f_L(x_{i_{t'+k}}) \doteq y_{j_{(t''+k)\text {mod}(t)}}
\text{ for } 1 \leq k \leq t-t'.
\end{equation}

The above defined function is compatible with the already defined function on the generators $x_{i_{t_k}}$ as otherwise the graphs $Gr_L(\LL(G)), Gr(G)$ would not be isomorphic, when considering the data attached to every vertex.

The defined function $f_L$ so far was defined on the generators (both in $G$ and in $\pi_1(\CC^2 - \LL(G))$) that participate in a multiple relation. If there are generators that do not
participate in any multiple relation, then they commute with all the other generators (this is since every two generators participate in a mutual conjugation-free cyclic relation. If this relation is not multiple, then it is of length $2$, hence a commutator).
 Assume that these generators in $\pi_1(\CC^2 - \LL(G))$ are $\{x_{w_1},\ldots,x_{w_u}\}$, and that these generators in $G$
are
$\{y_{w'_1},\ldots,y_{w'_u}\}$.
Hence define $f_L(x_{w_i}) = y_{w'_i}$ for every $1 \leq i \leq u$.

We have defined $f_L$ on every generator of $\pi_1(\CC^2 - \LL(G))$. We now have to check that $f_L$ is indeed an isomorphism, that is, we have to check that the relations are preserved. Let us note that if $n_i$ is the number of cyclic relation of length $i$ in $G$, then it is also
 the number of cyclic relation of length $i$ in $\pi_1(\CC^2 - \LL(G))$.

 Obviously, all the commutator relations are transferred  to commutator relations. Let $R_{g,\LL,(i_t,\ldots,i_1)}$ be a multiple relation
  in $\pi_1(\CC^2 - \LL(G))$, $v_{g,\LL} \in V_\LL$ be the corresponding vertex, $f_G(v_{g,\LL}) = v_{h,G} \in V_G$ the isomorphic vertex and
  $R_{h,G,(j_t,\ldots,j_1)}$  the corresponding  relation in $G$.

\begin{remark} \label{remCycRelPres}
\rm{
Assume that $H$ is a group with cyclic conjugation-free relation $R_t$ of length $t$. Let $\s = (1\,\,2 \ldots t) \in \text{Sym}_t$, $z_1,\ldots,z_t \in H$. Then it is easy to check that the cyclic relation of length $t$:
$$
R_t: z_tz_{t-1}\cdot \ldots \cdot z_1 = \cdots = z_1z_t \cdot \ldots \cdot z_2
$$
is invariant under the action of $\s$ on the indices, i.e., for every $0 \leq i < t$, if
$$
R^i_t: z_{\s^i(t)} z_{\s^i(t-1)}\cdot \ldots \cdot z_{\s^i(1)} = \cdots = z_{\s^i(1)}z_{\s^i(t)} \cdot \ldots \cdot z_{\s^i(2)},
$$
then $R_t = R^i_t$.
}
\end{remark}

We have to show that $f_L(R_{g,\LL,(i_t,\ldots,i_1)}) = R_{h,G,(j_t,\ldots,j_1)}$. Examining the isomorphism (defined at Equation (\ref{eqnDefIso})) on the generators $\{x_{i_m}\}_{m=1}^t$, we see that there exits $0 \leq r < t$ such that $f_L(x_{i_m}) = y_{j_{\s^r(m)}}$, hence
$$
f_L(R_{g,\LL,(i_t,\ldots,i_1)}) = R_{h,G,(j_{\s^r(t)},\ldots,j_{\s^r(1)})} \overset{Rem.\,\ref{remCycRelPres}}{=} R_{h,G,(j_t,\ldots,j_1)}.
$$
This proves that multiple relations are transferred to the corresponding multiple relations, and since the number of relations of any length is equal in $G$
and in  $\pi_1(\CC^2 - \LL(G))$, we are done. \end{proof}

\begin{remark} \label{remCFnoCyc}
\em{Let $G$ be a conjugation-free group such that $\beta(Gr(G))=0$. Then $\beta(Gr_L(\LL(G)))~=~0$ and  $G \simeq \pi_1(\CC^2 - \LL(G))$. Hence, by Remark \ref{remStrLineArr}, $G$ is isomorphic to a direct sum of a free abelian group and free groups.
}
\end{remark}

\subsection{The lower central series and line arrangements} \label{subsecLCSLineArr}

Let $G$ be a conjugation-free group with a  conjugation-free graph and let $\LL(G)$ be the associated line arrangement. We know that $G \simeq \pi_1(\CC^2 - \LL(G))$ by Proposition \ref{prsCFGgroupToLineGroup}. Moreover, note that if $n_i$ is the number of cyclic relation of length $i$ in $G$, then it is the number of singular points of multiplicity $i$ of $\LL(G)$.

Given any line arrangement $\LL$, denote $\phi_k(\LL) \doteq \phi_k(\pi_1(\CC^2 - \LL))$.
 Falk \cite[Proposition 3.8]{Falk2} proved that $$\phi_3(\LL) \geq \sum_{i \geq 3} n_i\omega_3(i-1),$$ where $n_i$ is the number of singular points of multiplicity $i$ (Moreover, it is proven in \cite{Falk2} that $\phi_k(\LL)$ are determined by the combinatorics of $\LL$).  Therefore, $$\phi_3(G) = \phi_3(\LL(G)) \geq \sum_{i \geq 3} n_i\omega_3(i-1).$$ By Proposition \ref{prsFirstPart}, we know that if $Gr(G)$ is a cycle-separated graph, then $$\phi_3(\LL(G)) = \phi_3(G) \leq \sum_{i \geq 3} n_i\omega_3(i-1).$$ Therefore, we can prove the following theorem:

\begin{thm}\label{thmMainPhi}
Let $G$ be a conjugation-free group whose graph is a cycle-separated graph. Then:
$$\phi_k(G) = \sum_{i \geq 3} n_i\omega_k(i-1), \emph{ for every  } k \geq 2.$$
\end{thm}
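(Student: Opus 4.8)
The plan is to treat the three ranges $k=2$, $k=3$, and $k\ge 4$ in turn and then combine them. The case $k=2$ needs no new argument: the combinatorial count carried out in Section~\ref{subsubsecG2G3} already yields $\phi_2(G)=\sum_{i\ge 3}n_i\omega_2(i-1)$ for \emph{every} cyclic-related group, hence in particular for the conjugation-free $G$ under consideration.

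For $k=3$ I would prove the equality by squeezing between a group-theoretic upper bound and a geometric lower bound. Since a cycle-separated graph is a conjugation-free graph and satisfies the relaxed hypotheses of Remark~\ref{remRelCond}, Proposition~\ref{prsCycSepG3G4} gives
$$
\phi_3(G)\le \sum_{i\ge 3}n_i\omega_3(i-1).
$$
For the reverse inequality I would pass to the associated line arrangement: by Proposition~\ref{prsCFGgroupToLineGroup} we have $G\simeq \pi_1(\CC^2-\LL(G))$, where $\LL(G)$ is the arrangement of Definition~\ref{defLineArrAss}, and the number $n_i$ of length-$i$ cyclic relations of $G$ coincides with the number of multiplicity-$i$ singular points of $\LL(G)$. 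Falk's inequality \cite[Proposition 3.8]{Falk2} then reads $\phi_3(G)=\phi_3(\LL(G))\ge \sum_{i\ge 3}n_i\omega_3(i-1)$, and the two bounds force equality.

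For $k\ge 4$ I would invoke the decomposability criterion of Papadima--Suciu \cite[Theorem 2.4]{PS}: once $\phi_3(G)$ attains the value $\sum_{i\ge 3}n_i\omega_3(i-1)$ dictated by the ``direct product of free groups'' model, $G$ is decomposable, and therefore $\phi_k(G)=\sum_{i\ge 3}n_i\omega_k(i-1)$ for all $k\ge 3$. This single step propagates the degree-$3$ equality to every higher degree.

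The main obstacle is not any isolated computation but checking that the two external inputs genuinely apply to the constructed arrangement. Concretely, one must verify that $\LL(G)$ is conjugation-free, so that the isomorphism $G\simeq\pi_1(\CC^2-\LL(G))$ is available: this rests on $Gr_L(\LL(G))\simeq Gr(G)$ (Propositions~\ref{prsCFgrToLine} and~\ref{prsIsoGraphs}) together with the fact that a cycle-separated graph is conjugation-free, so that Theorem~\ref{thmCFG} applies. A secondary point is connectivity, since the definition of a cycle-separated graph presupposes it; if $Gr(G)$ is a disjoint union of such components one reduces to the connected case using Example~\ref{exmGraphCycRel}(3) and Remark~\ref{remSevComp}, exploiting that the $\phi_k$ are additive over the resulting direct-sum decomposition of $G$.
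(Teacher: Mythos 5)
Your proposal is correct and follows essentially the same route as the paper: the $k=2$ count from Section \ref{subsubsecG2G3}, the squeeze at $k=3$ between the group-theoretic upper bound of Proposition \ref{prsCycSepG3G4} and Falk's lower bound transported through the isomorphism $G\simeq\pi_1(\CC^2-\LL(G))$ of Proposition \ref{prsCFGgroupToLineGroup}, and Papadima--Suciu \cite[Theorem 2.4]{PS} to propagate the equality to all $k\geq 3$. Your supplementary checks (that a cycle-separated graph is a conjugation-free graph so Theorem \ref{thmCFG} applies, and the reduction of a disconnected $Gr(G)$ to connected components via Example \ref{exmGraphCycRel}(3) and Remark \ref{remSevComp}) coincide with the paper's own supporting remarks.
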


\begin{proof}
For $k=2$ the equality was proved in Section \ref{subsubsecG2G3} and
it is clear from the discussion above that the equality holds for $k=3$. Moreover, Papadima-Suciu \cite[Theorem 2.4]{PS} proves that
 given a line arrangement $\LL$ such that
 $\phi_3(\LL) = \sum_{i \geq 3} n_i\omega_3(i-1)$
  then $\phi_k(\LL) = \sum_{i \geq 3} n_i\omega_k(i-1)$ for every $k \geq 3$. \end{proof}

\begin{definition} \label{defDecomp}
\emph{Given a line arrangement $\LL$ such that $\phi_k(\LL) = \sum_{i \geq 3} n_i\omega_k(i-1)$ for every $k \geq 3$, Papadima-Suciu \cite{PS} calls  these arrangements \emph{decomposable}, since  the associated graded Lie algebra $$  \bigoplus_{i\geq 2} \pi_1(\CC^2 - \LL)_i/\pi_1(\CC^2 - \LL)_{i+1}$$ behaves as if $\pi_1(\CC^2 - \LL)$ were a direct product of free groups. Following this definition, we call a group $G$ \emph{decomposable} if $ \oplus_{i\geq 2} G_i/G_{i+1}$ behaves as if $G$ were a direct product of free groups, i.e. if $\phi_k(G) = \sum_{i \geq 3} n_i\omega_k(i-1), \text{ for every  } k \geq 2$.}
\end{definition}

\begin{remark}
\rm{A group $G$ can be decomposable even though it might be that $G$ cannot be isomorphic to any finite product of free groups of finite rank. For example, let us look at the arrangement at Figure \ref{GraphToLine}(b.IV), which is also called $X_3$.

The graph of this arrangement is a cycle of length $3$, and thus $G_{X_3} = \pi_1(\CC^2 - X_3)$ is a conjugation-free group with a cycle-separated graph, i.e. it is decomposable. However, as it is shown, for example, in \cite[Section 1]{PS}, $G_{X_3}$ cannot be isomorphic to any such product.}
\end{remark}

\begin{remark} \label{RemPhi23Top}
\rm{Given any line arrangement $\LL$ with $n$ lines, the invariants $\phi_2(\LL), \phi_3(\LL)$ can be computed via topological invariants.

(1) It is known that
$\phi_2 = a_2$, where $a_i$ is the number of minimal generators of degree $i$ in the Orlik-Solomon ideal $I$ (see \cite[Definition 3.5]{OT}), or equivalently that $a_2 = \binom{n}{2} - b_2$, where $b_2$ is the second Betti number of $\C^2 - \A$ (see e.g. \cite{Falk} or Equation (\ref{eqnB2})). Moreover, if $G$ is a conjugation-free group with a cycle-separated graph, let $\LL(G)$ be the associated line arrangement. Then by the discussion above
$\phi_2(G) = \phi_2(\LL(G))$.

(2) By \cite[Coroallary 3.6]{SS}, $\phi_3 = b_3  + b_1 (\binom{b_1}{2} - b_2)  - \binom{b_1}{3} + a_3$, where the $b_i$'s are the Betti numbers of the central $3$-arrangement, which is the cone of our line arrangement in $\C^2$. While the $b_i$'s can be read directly from the intersection lattice of $\LL$, $a_3$ cannot.
}
\end{remark}

\begin{example}

\rm{In \cite[Section 7]{PS} a series of examples of hyperplane arrangements, consisting of graphic arrangements, was given, as an example of decomposable arrangements. Let us give a different example of  decomposable arrangements which are not graphic arrangements.

Let $A_k$ be the  braid arrangement in $\C^k$; i.e. $A_k = \{\prod_{1 \leq i < j \leq k}(x_i-x_j) =0\}$. Let $A^0_k \subset \C^2$ be the intersection of $A_k$ with a generic plane of dimension $2$. In order to find a decomposable line arrangement which is not a graphic arrangement,
it is enough to find a conjugation-free line arrangement which is not a sub-arrangement of $A^0_k$ (as every graphic arrangement is a sub-arrangement of $A_k$). The graph $Gr_L(A^0_k)$ has only vertices associated to multiple points of multiplicity $3$. This is since the intersection lattice of $A_k$ is the partition lattice (see e.g. \cite[Proposition 2.9]{OT}), and hence the codimension-two intersections have an intersection lattice equal to the rank-three truncation of the partition lattice. As can be easily checked, this truncation shows that the lines of $A^0_k$ intersect only at nodes or singular points with multiplicity $3$.

Therefore, a real line arrangement $\LL$, whose graph $Gr_L(\LL)$ is cycle-separated with some vertices associated to singular points whose multiplicity is greater or equal to $4$, cannot be a graphic arrangement, though it is decomposable (as  $Gr_L(\LL)$ is cycle-separated).


Note also that the arrangement $X_2$ (see Figure \ref{X2}(a) and e.g. \cite[Example 10.4]{Suc}) is decomposable but does not have a conjugation-free graph (see Figure \ref{X2}(b)).
 \begin{figure}[h!]
\epsfysize 4cm
\epsfbox{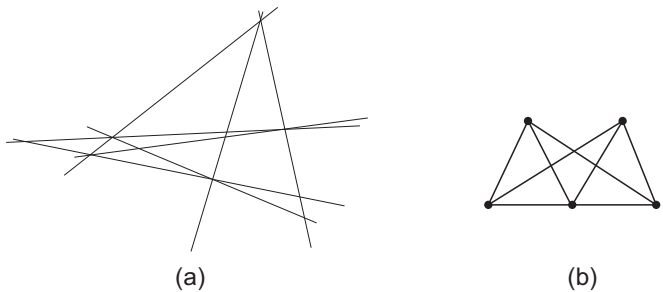}
\caption{Part (a): The arrangement $X_2$. Part (b): The graph $Gr_L(X_2)$ of the arrangement.}
\label{X2}
\end{figure}
}

\end{example}

\begin{remark}
\rm{The LCS formula of Falk and Randell \cite{FaRa} expresses the
ranks $\phi_k(\LL)$ of a fiber-type arrangement $\LL$ in terms of the Poincar\'{e} polynomial $P(M,t) = \sum b_it^i$ of the complement $M = \C^2 - \LL$:
$$
\prod_{k=1}^{\infty} (1-t^k)^{\phi_k(\LL)} = P(M,-t).
$$
Note that there are arrangements for which the LCS formula holds but the arrangements are not decomposable (e.g. the deleted $B_3$-arrangement, see \cite[Example 10.6]{Suc}), and there are arrangements for which the LCS formula does not hold but the arrangements are decomposable (e.g. the $X_3$ arrangement). Other LCS formulas were also obtained, for other types of arrangements; for example, for graphic arrangements \cite{LFS}.

Moreover, for decomposable line arrangements $\LL$ with $n$ lines there is another LCS formula, found by Papadima-Suciu \cite[Corollary 2.6]{PS}:
$$
\prod_{k=1}^{\infty} (1-t^k)^{\phi_k(\LL)} = (1-t)^{n-b_2(\LL)}\cdot \prod_{p \in Sing(\LL)} (1-(m(p)-1)t),
$$
where $m(p)$ is the multiplicity of the singular point $p$ and

\begin{equation} \label{eqnB2}
b_2(\LL) = \sum_{p \in Sing(\LL)} (m(p)-1). \end{equation}

 Note that this equation can also be stated for a conjugation-free group $G$ with a cycle-separated graph, generated by $n$ generators:
$$
\prod_{k=1}^{\infty} (1-t^k)^{\phi_k(G)} = (1-t)^{n-b_2(G)}\cdot \prod_{R \in Rel(G)} (1-(len(R)-1)t),
$$
where $Rel(G)$ is the set of relations of $G$, $len(R)$ is the length of the cyclic relation $R$ and \linebreak $b_2(G)=\sum_{R \in Rel(G)} (len(R)-1)$.
}
\end{remark}

Based on computations made for other arrangements, whose graph is not cycle-separated, we conclude this paper with the
following conjecture:
\begin{conjecture} \label{conjCF}
Let $\LL$ (resp. $G$) be  a conjugation-free line arrangement (resp. group). Then $\LL$ (resp. $G$) is decomposable.
\end{conjecture}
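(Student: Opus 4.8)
The plan is to reduce the conjecture, exactly as in the proof of Theorem~\ref{thmMainPhi}, to a single inequality. For any line arrangement Falk~\cite{Falk2} supplies the lower bound $\phi_3(\LL)\geq\sum_{i\geq3}n_i\omega_3(i-1)$, and Papadima--Suciu~\cite{PS} show that decomposability (Definition~\ref{defDecomp}) follows once the equality $\phi_3=\sum_{i\geq3}n_i\omega_3(i-1)$ holds in degree $3$. Hence the entire content of the conjecture is the matching upper bound $\phi_3(G)\leq\sum_{i\geq3}n_i\omega_3(i-1)$ for an arbitrary conjugation-free group $G$, i.e.\ the removal of the cycle-separated hypothesis from Proposition~\ref{prsFirstPart}. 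So I would aim to prove that every non-local generator $x_{j,i,k}\in G_3/G_4$ is trivial modulo $G_4$, now with no restriction on $\beta(Gr(G))$.

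To attack this upper bound I would mirror the case analysis of Proposition~\ref{prsFirstPart}, organised around the recursive structure of a conjugation-free graph (Definition~\ref{defCFG}): such a graph becomes one with $\beta\leq1$ after repeatedly deleting the set $X$ of its degree-$\leq2$ vertices together with their incident edges. The natural approach is induction along these deletions. A vertex of degree at most $2$ corresponds to a multiple relation meeting the rest of the graph in at most two generators, so the tail words $Y,Y',Y''$ (and their analogues appearing in \eqref{relPointC} and \eqref{relPointZ}) still commute with $x_k$ outside a small, explicitly controlled index set; for such a vertex the First, Second and Third cases of \ref{prsFirstPart} should go through essentially verbatim, conjugating $x_{j,i,k}$ into a product of terms already shown to vanish. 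The inductive step must then re-express the generators attached to the deleted vertex in terms of those surviving in $Gr(G)-X$.

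The main obstacle is precisely the configuration that Proposition~\ref{prsFirstPart} had to forbid: the bad pictures in Figures~\ref{Case2n}(b) and~\ref{Case3n}(b). When two cycles share structure, so that a single generator $x_k$ participates in three or more multiple relations lying on overlapping cycles, the commutator identities used to push $x_{j,i,k}$ through the relations \eqref{relPointY}, \eqref{relPointC} and \eqref{relPointZ} no longer close up: cross-terms between the two cycles survive in $G_3/G_4$ and there is no elementary reason for them to cancel. Controlling these cross-terms, equivalently proving that they lie in $G_4$, is the heart of the problem, and is exactly what the cycle-separated hypothesis was introduced to avoid.

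Dually, I would attack the same obstacle on the topological side, where decomposability of $\LL$ in degree $3$ is equivalent to the first resonance variety $\mathcal{R}_1(\LL)$ having only its local components, one for each multiple point. A global (non-local) component is carried by a multinet on $\LL$, so the conjecture becomes the purely combinatorial assertion that a conjugation-free graph supports no multinet. The hard part, and the reason the statement is only conjectural, is that it is not clear whether the condition of Definition~\ref{defCFG} is strong enough to rule out multinets; I would try to prove this by tracking how a hypothetical multinet class restricts under the vertex-peeling of Definition~\ref{defCFG}, reducing to the already-settled $\beta\leq1$ case, and would corroborate the reduction by testing the small non-cycle-separated arrangements against the formula $\phi_k=\sum_{i\geq3}n_i\omega_k(i-1)$.
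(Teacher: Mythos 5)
This statement is stated in the paper as Conjecture~\ref{conjCF}; the paper offers no proof of it, only supporting evidence (the cycle-separated case settled in Theorem~\ref{thmMainPhi}, plus unpublished computations), so there is no ``paper proof'' to match yours against. Your reduction is the correct and standard one, identical to the framework of Theorem~\ref{thmMainPhi}: by Falk's lower bound \cite{Falk2} and Papadima--Suciu \cite[Theorem 2.4]{PS}, the whole conjecture is equivalent to the upper bound $\phi_3(G)\leq\sum_{i\geq3}n_i\omega_3(i-1)$, i.e.\ to removing the cycle-separated hypothesis from Proposition~\ref{prsFirstPart}. But beyond this reduction your proposal is a research program, not a proof, and you say so yourself: the decisive steps are phrased as ``I would aim to prove'' and ``I would try to prove.'' The precise gap is the one you name: when a generator $x_k$ participates in three or more multiple relations lying on cycles that share vertices (the configurations of Figures~\ref{Case2n}(b) and \ref{Case3n}(b)), the hypotheses $[x_k,Y]=[x_k,Y']=[x_k,Y'']=e$ (and their analogues for $C,C',C''$ and $Z,Z',Z''$) fail, the conjugation manipulations through Relations~(\ref{relPointY}), (\ref{relPointC}), (\ref{relPointZ}) produce commutator cross-terms between distinct cycles, and no argument is supplied that these cross-terms lie in $G_4$. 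Exhibiting such an argument \emph{is} the conjecture; nothing in your sketch advances past identifying it.

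Two further points of caution about the proposed strategies. First, the induction along the vertex-peeling of Definition~\ref{defCFG} is not well-founded as stated: deleting a degree-$\leq2$ vertex and its edges changes the presentation (a multiple relation disappears while its generators remain), and you establish no functoriality relating $\phi_3$ of the peeled group to $\phi_3(G)$; note that the paper's own relaxation in Remark~\ref{remRelCond} is a condition on which commutations are available in the \emph{fixed} group, not a genuine induction on the graph, so ``go through essentially verbatim'' needs real justification. Second, on the topological side, the translation ``decomposable in degree $3$ iff the first resonance variety has only local components, iff no multinet'' is plausible but is itself used without proof, and the resulting combinatorial claim --- that a conjugation-free graph supports no multinet --- is again exactly an open restatement of the conjecture rather than a reduction to anything settled. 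In short: correct framing, correct identification of the obstruction, but no proof, which is consistent with the paper leaving the statement as a conjecture.
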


\end{document}